\newcommand{\grad}{\nabla}
\newcommand{\inc}{K}
\newcommand{\norm}[2]{\left\lVert #1\right\rVert_{#2}}
\newcommand{\weaklyto}{\rightharpoonup}
\newcommand{\Id}{\mathrm{I}}
\newcommand{\cts}{\hookrightarrow}
\newcommand{\ctsCompact}{\xhookrightarrow{c}}
\newcommand{\ctsDense}{\xhookrightarrow{d}}
\newcommand{\m}{\mathsf{m}}
\newcommand{\M}{\mathsf{M}}
\providecommand{\keywords}[1]{\textbf{\textit{Keywords: }} #1}
\newtheorem{theorem}{Theorem}[section]
\newtheorem{prop}[theorem]{Proposition}
\newtheorem{lem}[theorem]{Lemma}
\newtheorem{egs}[theorem]{Examples}
\newtheorem{eg}[theorem]{Example}
\newtheorem{remark}[theorem]{Remark}
\theoremstyle{definition}
\begin{document}
\hypersetup{
  urlcolor     = blue, 
  linkcolor    = Bittersweet, 
  citecolor   = Cerulean
}
\title{On the differentiability of the minimal and maximal solution maps of elliptic quasi-variational inequalities\footnotetext{The authors thank the referees for their excellent comments which helped to improve the paper. AA and MH were partially supported by the DFG through the DFG SPP 1962 Priority Programme \emph{Non-smooth and Complementarity-based
Distributed Parameter Systems: Simulation and Hierarchical Optimization} within project 10. 
MH and CNR acknowledge the support of Germany's Excellence Strategy - The Berlin Mathematics Research Center MATH+ (EXC-2046/1, project ID: 390685689) within project AA4-3.
In addition, MH acknowledges the support of SFB-TRR154 within subproject B02, and CNR was supported by NSF grant DMS-2012391.}}
\author{Amal Alphonse\thanks{Weierstrass Institute, Mohrenstrasse 39, 10117 Berlin, Germany ({\tt alphonse@wias-berlin.de})} \and Michael Hinterm\"{u}ller\thanks{Weierstrass Institute, Mohrenstrasse 39, 10117 Berlin, Germany ({\tt hintermueller@wias-berlin.de})}   \and Carlos N. Rautenberg\thanks{Department of Mathematical Sciences and the Center for Mathematics and Artificial Intelligence (CMAI), George Mason University, Fairfax, VA 22030, USA ({\tt crautenb@gmu.edu})}}
\maketitle
\abstract{In this note, we prove that the minimal and maximal solution maps associated to elliptic quasi-variational inequalities of obstacle type are directionally differentiable  with respect to the forcing term and for directions that are signed. Along the way, we show that the minimal and maximal solutions can be seen as monotone limits of solutions of certain variational inequalities and that the aforementioned directional derivatives can also be characterised as the monotone limits of sequences of directional derivatives associated to variational inequalities. We conclude the paper with some examples and an application to thermoforming.}

\keywords{Quasi-variational inequality, obstacle problem, directional differentiability, minimal and maximal solutions, ordered solutions.}

\section{Introduction}\label{sec:intro}
Quasi-variational inequalities (QVIs) are variational inequalities (VIs) where the constraint set over which the solution is sought also depends on the solution itself.  As such, QVI problems are highly nonlinear and nonconvex and in sharp contrast to the usual setting for VIs, QVIs usually possess multiple solutions. In certain situations, the set of solutions can be ordered in the sense that there exist minimal and maximal solutions. In this paper, we address the directional differentiability of the maps taking the source term of a QVI into the minimal and maximal solutions. The above-mentioned quirks of QVIs endow their study with substantial technical issues to overcome when examining questions of stability analysis and differential sensitivity.

QVIs were first formulated by Bensoussan and Lions \cite{BensoussanLionsArticle, Lions1973} in the modelling of stochastic impulse controls.  Applications of QVIs are ubiquitous. Among some, we mention thermoforming processes \cite{AHR, ARJF}, the formation and growth of lakes, rivers and sandpiles \cite{PrigozhinSandpile,BarrettPrigozhinSandpile,Prigozhin1994,MR3231973}, generalised Nash equilibrium games \cite{HARKER199181, Facchinei2007, Pang2005}, and magnetisation of superconductors \cite{KunzeRodrigues,BarrettPrigozhinSuperconductivity,Prigozhin,MR1765540}. Additional details and references can be found in our survey paper \cite{AHRTrends} and the book \cite{Baiocchi}.

We focus on elliptic QVIs of obstacle type (these are also known as implicit obstacle problems). The precise formulation is as follows. Let $V \subset H$ be a continuous and dense embedding of separable Hilbert spaces and denote the inner product on $H$ by $(\cdot,\cdot) = (\cdot,\cdot)_H$. Suppose that there exists an ordering to elements of $H$ via a closed convex cone $H_+$ that satisfies 
\[H_+ = \{h \in H : (h,g) \geq 0 \quad \forall g \in H_+\}.\]
The ordering is defined by: $h_1 \leq h_2$ if and only if $h_2-h_1 \in H_+$. This endows an ordering  for $V$ in the obvious way and we write $V_+ := \{ v \in V : v \geq 0\}$.  It also induces one for the dual space $V^*$ via
\[V^*_+ := \{ f \in V^* : \langle f, v\rangle \geq 0 \quad \forall v \in V_+\},\]
where $\langle \cdot, \cdot \rangle = \langle \cdot, \cdot \rangle_{V^*, V}$ is the standard duality product. We write $h^+$ for the orthogonal projection of $h \in H$ onto the space $H_+$ and we use the decomposition $h = h^+ - h^-$. Suppose that $v \in V$ implies that $v^+ \in V$ and that there exists a constant $C >0$ such that $\lVert{v^+}\rVert_{V} \leq C\norm{v}{V}$ for all $v \in V$.\footnote{Some concrete realisations of this setup will be presented in Examples \ref{eg:examples}.}

Let  $A\colon V \to V^*$ a linear operator that satisfies the following properties for all $u, v \in V$: 
\begin{align*}
\langle Au, v \rangle &\leq C_b\norm{u}{V}\norm{v}{V}\tag{boundedness},\\
\langle Au, u \rangle &\geq C_a\norm{u}{V}^2\tag{coercivity},\\
\langle Au^+, u^- \rangle &\leq 0\tag{T-monotonicity}.
\end{align*}
We consider an obstacle map
\[\Phi\colon H \to V\]
and the associated constraint set $\mathbf{K}\colon H \rightrightarrows V$ defined by 
\[\mathbf{K}(y):=\{v \in V : v \leq \Phi(y)\}.\]
Given $f \in V^*$, we study QVIs of the form
\begin{equation}\label{eq:QVI}
\text{find } y \in \mathbf{K}(y) : \langle Ay-f, y-v \rangle \leq 0 \quad \forall v \in \mathbf{K}(y).
\end{equation}
In this work, we always take $\Phi$ to be increasing, i.e., 
\[u \leq v \text{ implies } \Phi(u) \leq \Phi(v),\]
and we suppose also that (unless stated otherwise) it satisfies the property
\begin{align}
&\text{if $\{v_n\} \subset V$ is decreasing with $v_n \weaklyto v$ in $V$ and $v \leq \Phi(v_n)$, then $v \leq \Phi(v).$}\label{ass:PhiPresOrder}
\end{align}
This holds, for example, if $\Phi$ is linear and continuous, or if $\Phi\colon V \to V$ is weakly sequentially continuous (that is,  $v_n \weaklyto v$ in $V$ implies $\Phi(v_n) \weaklyto \Phi(v)$ in $V$).

We define $\mathbf{Q}$ to be the source-to-solution map associated to \eqref{eq:QVI}, so that the inequality reads $y \in \mathbf{Q}(f)$.  To prove that the set $\mathbf{Q}(f)$ is non-empty (and indeed to properly define the problem under study in this article) we need some additional details. 
\subsection{Existence of (extremal) solutions}\label{sec:existenceOfExtremals}
Fixing an obstacle $\varphi \in V$, consider the VI
\[u \in \mathbf{K}(\varphi) : \langle Au - f, u - v \rangle \leq 0 \quad \forall v \in \mathbf{K}(\varphi)\]
and denote its solution map $S\colon V^* \times H \to V$ so that the above can be stated as $u=S(f,\varphi)$. It follows that $\mathbf{Q}(f)$ is the set of fixed points of $\varphi \to S(f,\varphi)$. In order to show the presence of fixed points, we are going to assume the existence of a \emph{subsolution} $\underline{u}$ and a \emph{supersolution} $\overline{u}$  for $S(f,\cdot)$, that is,
\[\exists \underline{u}, \overline{u} \in V \quad \text{such that}\quad \underline{u} \leq S(f,\underline{u}) \quad\text{and}\quad \overline{u} \geq S(f, \overline{u}),\]
and such that the subsolution lies below the supersolution:
\[\underline{u} \leq \overline{u}.\]
\begin{remark}\label{rem:subAndSupSolns}
For a supersolution, we can take any $\overline{u} \in V$ satisfying $\overline{u} \geq A^{-1}f$ where the right-hand side is (by definition) the solution of the equation $Az = f.$ This is a valid choice since $A^{-1}f$ can be considered as the solution\footnote{Informally, $A^{-1}f = S(f, \Phi^{-1}(\infty))$.} of the VI with source $f$ and obstacle equal to $\infty$, which by the comparison principle \cite[Theorem 5.1, \S 4:5]{Rodrigues}, exceeds $S(f,\overline{u})$. If $f \geq 0$ and $\Phi(0) \geq 0$, then we may take $\underline{u}=0$ to be a subsolution: $0 = S(0,0) \leq S(f,0)$. 
\end{remark}
Under these circumstances, we can apply the Birkhoff--Tartar theory \cite{Tartar74, Birkhoff} (see also \cite[Chapter 15.2.2]{Aubin} and \cite[Chapter 2.5]{Mosco1976}) of fixed points in vector lattices to obtain not only that
\[\mathbf{Q}(f) \cap [\underline{u},\overline{u}] \neq \emptyset\]
(i.e., \eqref{eq:QVI} has solutions on $[\underline{u}, \overline{u}]$), but moreover, there exists a minimal solution $\m(f)$ and a maximal solution $\M(f)$ in this interval with respect to the ordering introduced above. 
These satisfy
\[\m(f) \leq u \leq \M(f) \quad \forall u \in \mathbf{Q}(f) \cap [\underline{u}, \overline{u}].\]
\begin{remark}
The condition \eqref{ass:PhiPresOrder} is not required for the above results.
\end{remark}
One should bear in mind that these maps $\m$ and $\M$ are associated a particular interval $[\underline{u}, \overline{u}]$ and sometimes we may show the dependence on the interval explicitly by writing  $\m_{[\underline u, \overline u]}
$ and $\M_{[\underline u, \overline u]}$, or more generally, provided they exist, we denote the minimal and maximal solutions on some interval $A$ by $\m_A$ and $\M_A$ respectively. 

However, when discussing the minimal solution map, only the lower bound of the interval under consideration is pertinent due to the trivial observation that  if $w \leq a \leq b$, $\m|_{[w, a]}(f) = \m|_{[w, b]}(f)$ whenever these exist, and similarly, only the upper bound of the interval for the maximal solution map is important since if $a \leq b \leq z$ then $\M|_{[a,z]}(f) = \M|_{[b,z]}(f)$.
\subsection{Aim of the article}\label{sec:aims}
In this paper, we are interested in the directional differentiability of $f \mapsto \m(f)$ and $f \mapsto \M(f)$. We will show that, under some assumptions, these maps are indeed directionally differentiable for a subset (that we will specify below) of directions belonging to $V^*$. That is, we prove the existence of the following limits: 
\begin{align}
\lim_{s \to 0^+}\frac{\m(f+sd)-\m(f)}{s} \qquad\text{and}\qquad \lim_{s \to 0^+}\frac{\M(f+sd)-\M(f)}{s}.
\end{align}
We also give a characterisation of the derivative in terms of a related QVI. This builds upon our previous works \cite{AHR, AHROC, AHRStability} in two ways. In \cite{AHR, AHROC} we showed that $\mathbf{Q}$ has a directional derivative in the sense that we proved that for every $u \in \mathbf{Q}(f)$, given a direction $d \in V^*$, there exists $u^s \in \mathbf{Q}(f+sd)$ and (a directional derivative) $\alpha \in V$ such that 
\[\lim_{s \to 0^+} \frac{u^s-u}{s} = \alpha\]
(with the limit taken in $V$).  In \cite{AHROC}, we derived further existence results for \eqref{eq:QVI} and procedures to iteratively approach solutions of the QVI. Furthermore, we also obtained stationarity systems for optimal control problems with QVI constraints. On the other hand, in \cite{AHRStability}, we studied continuity properties of the extremal operators $\m$ and $\M$. This work then can be considered as a bridge between these two sets of papers.

The motivations for this study are manifold:
\begin{enumerate}[label=(\roman*)]
\item the mathematical problem itself is challenging and interesting,
\item suitable characterisations of the directional derivative could be used to develop efficient
bundle-free numerical solvers and solution algorithms along the lines of \cite{MR3555389},
\item in applications, it can be important to know the influence of the source term on the solution, for example in thermoforming (which can be modelled through QVIs, see \cite[\S 6]{AHR} and also \S \ref{sec:thermo} later), manufacturers may wish to understand the sensitivity of the desired shapes or products to changes in heating processes under their control,
\item for optimal control problems with control-to-state maps involving $\m$ or $\M$, differentiability of the maps is necessary in order to state certain first-order stationarity conditions satisfied by the optimal control,
\item in applications involving optimal control problems with QVI constraints, as typically there are many states associated to a single (optimal) control (due to non-uniqueness of solutions), it can be important to minimise, for instance, the difference $\M(f)-\m(f)$. In the case of thermoforming, manufacturers may wish to reproduce shapes or products that are within a certain acceptable tolerance value. 
\end{enumerate}
Note that continuity properties of these maps (studied under a similar functional setting in \cite{AHRStability}) are vital for the existence results for optimal control problems.

The idea is to base our developments on the differentiability results obtained in \cite{AHROC}; let us recall this and set the scene in the next section. First, some words on our notation and conventions.

\medskip

\noindent \textbf{Notation.} Throughout the rest of the paper, we shall use the notation $o(\cdot)$ to denote a remainder term, i.e., $s^{-1}o(s) \to 0$ in $V$ as $s \to 0^+$. We will write $v_n \nearrow v$ to mean $v_n \to v$ and $v_{n+1} \geq v_n$ for all $n$ (i.e., $\{v_n\}$ is an increasing sequence that converges to $v$). Similarly, $\searrow$ will refer to decreasing convergent sequences.
The notation $B_R(z)$ will be used to mean the closed ball in $V$ of radius $R$ centred at $z$.
\section{Background on directional differentiability for QVIs}
Recall from \S \ref{sec:intro} the constraint set mapping $\mathbf{K}\colon H \rightrightarrows V$ 
which is closed (since $V_+$ is closed) and convex valued, and associated to this, we define the \emph{tangent cone} of $\mathbf K(w)$ at a point $u\in \mathbf K(w)$ by
\[\mathcal{T}_{\mathbf K(w)}(u) := \overline{\mathcal{R}_{\mathbf K(w)}(u)} \quad\text{where}\quad \mathcal{R}_{\mathbf K(w)}(u) := \{ h \in V : \exists  s^* > 0 \text{ such that } u+sh \in \mathbf{K}(w) \;\forall s \in [0,s^*]\}.\]
The latter is known as the \emph{radial cone} and can be regarded as the set of all feasible directions. See for example \cite[\S 2.2.4]{Bonnans} for more details on these concepts.

 A fundamental result that we shall need is the following, which, under certain circumstances, tells us that $\mathbf{Q}$ has a directional derivative in a certain sense and provides a characterisation. 
%
\begin{theorem}[{\cite[Theorem 3.2]{AHROC}}]\label{thm:dirDiff1}
Given $f \in V^*$ and $d \in V^*$, for every $u \in \mathbf{Q}(f)$, under the local assumptions 
\begin{align}
&\text{there exists $\epsilon > 0$ such that  } \Phi\colon B_\epsilon(u) \to V \text{ is Lipschitz with Lipschitz constant $C_\Phi < C_a\slash(C_a+C_b)$},\label{ass:PhiLipBound}\\
&\text{$\Phi\colon V \to V$ is Hadamard directionally differentiable at $u$,}\label{ass:PhiHadamardAtY}
\end{align}
there exists $u^s \in \mathbf{Q}(f+sd)\cap B_R(u)$  (where $0 < R \leq \epsilon$) and $\alpha=\alpha(d)\in V$ such that
\[u^s = u + s\alpha + o(s),\]
where  $\alpha$ satisfies the QVI
\begin{equation}\label{eq:QVIforAlphaGeneral}
\begin{aligned}
\alpha \in \mathcal{K}^u(\alpha) &: \langle A\alpha - d, \alpha - v \rangle \leq 0 \quad \forall v \in \mathcal{K}^u(\alpha),\\
\mathcal{K}^u(\alpha) &:= \Phi'(u)(\alpha)  + \mathcal{T}_{\mathbf K(u)}(u) \cap [f-Au]^\perp.   
\end{aligned}
\end{equation}
The directional derivative $\alpha=\alpha(d)$ is positively homogeneous in $d$.  
\end{theorem}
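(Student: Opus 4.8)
\emph{Strategy.} My plan is to obtain $u^s$ as a Banach fixed point of a perturbed elliptic variational inequality, and then to read off the expansion $u^s = u + s\alpha + o(s)$ by transporting, through that fixed-point iteration, two differentiability facts: Hadamard directional differentiability of $\Phi$ at $u$ (assumption \eqref{ass:PhiHadamardAtY}) and the classical Hadamard directional differentiability of the solution operator of an elliptic VI with a \emph{fixed} obstacle. To disentangle the obstacle dependence I would first record the affine identity $S(g,\varphi) = \varphi + S_0(g - A\varphi)$ for $g \in V^*$, $\varphi \in V$, where $\mathcal{C} := \{v \in V : v \le 0\}$ is a closed convex cone and $S_0 \colon V^* \to V$ solves the VI $\langle Aw - g, w - v\rangle \le 0$ for all $v \in \mathcal{C}$ (substitute $w = u - \varphi$ in the VI defining $S$). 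Thus $u \in \mathbf{Q}(f)$ is exactly $u = T_0(u)$, where $T_s(\varphi) := \Phi(\varphi) + S_0(f + sd - A\Phi(\varphi))$; set $g_0 := f - A\Phi(u)$, so that $u - \Phi(u) = S_0(g_0)$ with associated multiplier $g_0 - AS_0(g_0) = f - Au$.

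\emph{Construction of $u^s$ and of the candidate $\alpha$.} Testing two instances of the VI for $S_0$ against each other and using coercivity shows $S_0$ is $\tfrac{1}{C_a}$-Lipschitz from $V^*$ to $V$; combined with boundedness of $A$ and the bound $C_\Phi < C_a/(C_a+C_b)$ from \eqref{ass:PhiLipBound}, the map $T_s$ is a contraction on $B_\epsilon(u)$ with modulus $q := C_\Phi(C_a+C_b)/C_a < 1$, uniformly in $s$. Since $\norm{T_s(u) - u}{V} \le s\norm{d}{V^*}/C_a$, for a suitable $0 < R \le \epsilon$ and all small $s$ the ball $B_R(u)$ is invariant under $T_s$, so Banach's theorem yields a unique fixed point $u^s \in B_R(u) \cap \mathbf{Q}(f+sd)$ with $\norm{u^s - u}{V} = O(s)$. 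For the derivative, one invokes that $\mathcal{C}$ is polyhedric under the standing assumptions (in particular $v \mapsto v^+$ maps $V$ into $V$), so by Mignot-type differentiability results $S_0$ is directionally, hence — being locally Lipschitz — Hadamard directionally differentiable at every point, with $S_0'(g)(\delta)$ solving the VI over $\mathcal{T}_{\mathcal{C}}(S_0(g)) \cap [g - AS_0(g)]^\perp$; at $g_0$ this critical cone equals $\mathcal{T}_{\mathbf{K}(u)}(u)\cap[f-Au]^\perp$ because $\mathbf{K}(u) = \Phi(u) + \mathcal{C}$. I then define $\alpha$ to be the unique fixed point of $\beta \mapsto \Phi'(u)(\beta) + S_0'(g_0)\big(d - A\Phi'(u)(\beta)\big)$ — again a contraction of modulus $q$, as $\Phi'(u)$ inherits the Lipschitz constant $C_\Phi$ of $\Phi$ and $S_0'(g_0)$ is $\tfrac{1}{C_a}$-Lipschitz. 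Setting $\gamma := \alpha - \Phi'(u)(\alpha) = S_0'(g_0)(d - A\Phi'(u)(\alpha))$ and rewriting the VI for $\gamma$ gives $\langle A\alpha - d, \gamma - v\rangle \le 0$ for all $v \in \mathcal{T}_{\mathbf{K}(u)}(u)\cap[f-Au]^\perp$, which after the shift $v \mapsto \Phi'(u)(\alpha) + v$ is precisely the QVI \eqref{eq:QVIforAlphaGeneral}.

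\emph{Passage to the limit.} I would run the two iterations in parallel: $u_0^s := u$, $u_{n+1}^s := T_s(u_n^s)$, and $\alpha_0 := 0$, $\alpha_{n+1} := \Phi'(u)(\alpha_n) + S_0'(g_0)(d - A\Phi'(u)(\alpha_n))$, so that $\norm{u^s - u_n^s}{V} \le q^n\norm{u^s - u}{V} \le C_1 q^n s$ and $\norm{\alpha - \alpha_n}{V} \le q^n\norm{\alpha}{V} = C_2 q^n$. By induction on $n$ one proves $u_n^s = u + s\alpha_n + o(s)$ for each fixed $n$: in the step $u_{n+1}^s = \Phi(u_n^s) + S_0(f + sd - A\Phi(u_n^s))$, Hadamard differentiability of $\Phi$ at $u$ gives $\Phi(u_n^s) = \Phi(u) + s\Phi'(u)(\alpha_n) + o(s)$, hence the argument of $S_0$ equals $g_0 + s(d - A\Phi'(u)(\alpha_n)) + o(s)$, and Hadamard differentiability of $S_0$ at $g_0$ (which is exactly what allows the inner $o(s)$ to be absorbed) closes the step. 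Splitting $u^s - u - s\alpha = (u^s - u_n^s) + (u_n^s - u - s\alpha_n) + s(\alpha_n - \alpha)$ and choosing $n$ large first (so $(C_1+C_2)q^n$ is small) and then $s$ small (so the middle term is small for that $n$) yields $u^s = u + s\alpha + o(s)$. Positive homogeneity of $d \mapsto \alpha(d)$ then follows from that of $\Phi'(u)$ and $S_0'(g_0)$ and linearity of $A$, since the fixed-point map above is equivariant under $(d,\beta)\mapsto(td,t\beta)$ for $t>0$, so $\alpha(td) = t\alpha(d)$.

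\emph{Main obstacle.} The crux is the limit $n\to\infty$: one must organise the remainder terms so that the iteration index $n$ and the step size $s$ can be sent to their limits in the correct order — a uniform-in-$s$ geometric contraction for $\norm{u^s - u_n^s}{V}$, played off against a per-$n$ remainder $o(s)$ for $u_n^s - u - s\alpha_n$ — and one must check that every iterate remains inside $B_\epsilon(u)$, where $\Phi$ is Lipschitz and Hadamard differentiable. A lesser but genuine point is securing the Hadamard (not merely Gâteaux) directional differentiability of $S_0$, which is needed to accommodate the $o(s)$ perturbations appearing inside its argument at each step of the induction.
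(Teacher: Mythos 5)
Your proposal is correct and follows essentially the same route as the paper, which does not reprove this imported result but sketches its proof in \S\ref{sec:strategy} as exactly your three steps: iterating the VI solution map starting from $u$ (with convergence via the Banach fixed-point argument and the contraction modulus $C_\Phi(C_a+C_b)/C_a<1$), expanding each iterate $u^s_n = u + s\alpha_n + o_n(s)$ through the Mignot--Wachsmuth sensitivity theory for polyhedric obstacle sets, and then interchanging the limits in $n$ and $s$ by playing the uniform-in-$s$ geometric contraction against the per-$n$ remainders. Your reduction to the fixed cone $\mathcal{C}=\{v\le 0\}$ via $S(g,\varphi)=\Phi(\varphi)+S_0(g-A\Phi(\varphi))$, the identification of the critical cone with $\mathcal{T}_{\mathbf K(u)}(u)\cap[f-Au]^\perp$, and the fixed-point characterisation of $\alpha$ all match the cited construction, including the reliance on polyhedricity of the ordering cone, which is the same external input the original proof takes from Mignot and Wachsmuth.
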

It is worth emphasising that in fact, \cite[Theorem 3.2]{AHROC} is posed in a much more general functional setting than under consideration in this paper (described in the introduction).
\begin{remark}\label{rem:2}
It is not too difficult to see that \cite[Lemma 3.12]{AHROC}  if $\Phi$ is Hadamard differentiable on $B_\epsilon(u)$, then the condition
\begin{align*}
&\exists \epsilon > 0 : \norm{\Phi'(z)(v)}{V} \leq C_\Phi\norm{v}{V}\quad\forall z \in B_\epsilon(u), \forall v \in V,\text{ where } C_\Phi < C_a\slash(C_a+C_b)\label{ass:smallnessOfDerivOfPhiAty}
\end{align*}
implies \eqref{ass:PhiLipBound}; this can in certain situations be easier to verify.
\end{remark}
The set $\mathcal{K}^u(\alpha)$ appearing in \eqref{eq:QVIforAlphaGeneral} is called a \emph{critical cone} and we can be a bit more precise about its structure if  additional assumptions on the functional setting are available as the next remark shows.
\begin{remark}[Dirichlet space case]\label{rem:dirichletSpace}
Suppose that $H:=L^2(X;\mu)$ where $X$ is a locally compact, separable metric space and $\mu$ is a positive Radon measure on $X$ with full support\footnote{This means that $\mu$ is a non-negative Borel measure which is finite on compact sets and strictly positive on non-empty open sets.}, and let $V \subset H$ be a dense subspace (the ordering is given by the usual a.e. ordering of functions). 

Assume that 
\begin{enumerate}[label=(\roman*)]
\item there exists a symmetric, positive semidefinite bilinear form $\xi\colon V \times V \to \mathbb{R}$ such that $V$ is a Hilbert space when  endowed with
\[(\cdot,\cdot)_V := (\cdot,\cdot)_H + \xi(\cdot,\cdot),\]
\item $\text{if $u \in V$ then $\hat u := \min(u^+,1) \in V$ and $\xi(\hat u,\hat u) \leq \xi(u,u)$}$
(this is the \emph{Markov property}),
\item the following density conditions hold:
\begin{equation*}
V\cap C_c(X)  \ctsDense C_c(X) \qquad \text{and}\qquad  V\cap C_c(X)  \ctsDense V. 
\end{equation*}
\end{enumerate}
Under these circumstances, $V$ is known as a \emph{regular Dirichlet space} and the notions of capacity, quasi-continuity and related concepts are well defined, see \cite[\S 2.1]{FukushimaBook} and \cite[\S 3]{Haraux1977} for more details.  Furthermore, from the work\footnote{More specifically, we have the polyhedricity of sets of obstacle type and the differentiability of VI solution maps associated to these types of constraint sets in \cite[Theorem 3.3]{MR0423155}.} of Mignot  \cite[Theorem 3.2, Lemma 3.2]{MR0423155}, the critical cone appearing in \eqref{eq:QVIforAlphaGeneral} can be expressed explicitly as
\[\mathcal{K}^u(w) = \{ \varphi \in V : \varphi \leq \Phi'(u)(w) \text{ q.e. on }\mathcal{A}(u) \text{ and } \langle Au-f, \varphi -\Phi'(u)(w)\rangle = 0\}.\]
Here, `q.e.' means quasi-everywhere, i.e., everywhere except on a set of capacity zero, and $\mathcal{A}(u)$ refers to the \emph{active} or \emph{coincidence set}\footnote{Strictly speaking, we take the quasi-continuous representatives of the functions appearing in the definition of the active set so that $\mathcal{A}(y)$ is quasi-closed and defined up to sets of capacity zero.} of the solution $y$ to the QVI, i.e.,
\[\mathcal{A}(u) := \{ x \in X : u(x) = \Phi(u)(x)\} \quad \text{for $u \in V$.}\]
\end{remark}
Note that in the context of $V$ and $H$ being Sobolev or Lebesgue spaces over a domain $\Omega$, in certain cases the space $X=\overline{\Omega}$ and not merely $\Omega$! See Examples \ref{eg:examples} for more details.
\section{Strategy of proof and some preliminary results}\label{sec:strategy}

Let us sketch the procedure we will undertake for showing directional differentiability for the minimal solution map $\m$. Our aim is to show that, given a source term $f$ and a direction $d$, there exists an element $\m'(f)(d)$ such that
\[\m(f+sd)=\m(f) + s\m'(f)(d) + o(s).\]
As we have seen, Theorem \ref{thm:dirDiff1} states that under certain assumptions, given $u \in \mathbf{Q}(f)$, there exists $u^s \in \mathbf{Q}(f+sd)$ and $\alpha \in V$ such that
\[u^s = u + s\alpha + o(s).\]
We may select $u$ to be the minimal solution $\m(f)$ and it remains to prove that the selection mechanism of the theorem that furnishes the $u^s$ is indeed such that $u^s \equiv \m(f+sd)$. To do this, we need to take a closer look at the method of proof of the cited theorem. The proof relies on 
\begin{enumerate}
\item[(i)] creating an iterative sequence of solutions of VIs:
\begin{align*}
u^s_n &= S(f+sd, u^s_{n-1}),\\
u^s_0 &= u,
\end{align*}
(recall the definition of $S$ from \S \ref{sec:existenceOfExtremals})
\item[(ii)]  obtaining, by applying the sensitivity results for VIs \cite{MR0423155, WGuidedTour}, 
expansion formulas of the type 
\begin{equation}
u^s_n = u + s\alpha_n + o_n(s)\label{eq:expansionFormulaN}
\end{equation}
for these elements, and then 
\item[(iii)] passing to the limit $n \to \infty$ and identifying the limits of $\{u_n^s\}, \{\alpha_n\}$ and $\{o_n\}$.
\end{enumerate}
Thus, it is clear that we need to show that the limit of $\{u^s_n\}$ (which exists) is indeed $\m(f+sd)$. For this purpose, we need to prove some properties of $\m$ and $\M$ which we shall do so in a series of lemmas. Before we begin, it is important to record that in \cite[Proposition 3.7]{AHROC}, under the local Lipschitz assumption \eqref{ass:PhiLipBound}, we showed (via a Banach fixed point theorem argument) that 
\[u_n^s \to u^s \text{ in $V$  where } u^s \in \mathbf{Q}(f+sd) .\]
\subsection{Monotone convergence to extremal solutions}
Let $f \in V^*$ be a source term. In this section, we assume that we are given a subsolution $\underline{u}$ and a supersolution $\overline{u}$ for $S(f,\cdot)$ with $\underline{u} \leq \overline{u}$ (this is the same setup as in \S \ref{sec:existenceOfExtremals}). We introduce the assumption
\begin{align}
&\exists v_0 \in V : v_0 \leq \Phi(\underline{u}).\label{ass:feasibleForMin}
\end{align}
\begin{remark}
The condition \eqref{ass:feasibleForMin} essentially asks for $\mathbf{K}(\underline{u})$ to be non-empty. A typical situation is where $f \in V^*_+$ is non-negative and $\Phi$ is taken such that $\Phi(0) \geq 0$, in which case $\underline{u}:=0$ is a subsolution (as shown in Remark \ref{rem:subAndSupSolns}) so that $v_0 \equiv 0$ is a possibility.
\end{remark}
The following lemma demonstrates that, under some compactness on $\Phi$, starting at a subsolution we can converge to the minimal solution.  The result is not strictly necessary for our main theorem yet we provide it for interest and completeness (note that the required condition \eqref{ass:PhiCompletelyCts} is much stronger than \eqref{ass:PhiPresOrder}).
\begin{lem}\label{lem:convergenceToMin}Assume  \eqref{ass:feasibleForMin} and  that
\begin{equation}
\text{$\Phi\colon V \to V$ is completely continuous}\label{ass:PhiCompletelyCts}.
\end{equation}  
Then the sequence
\begin{equation}\label{eq:sequenceUn}
\begin{aligned}
u_n &:= S(f,u_{n-1}),\\
u_0&:=\underline{u}.
\end{aligned}
\end{equation}
satisfies $u_n \nearrow \m(f)$ in $V$.
\end{lem}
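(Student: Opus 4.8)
The plan is to show the sequence $\{u_n\}$ defined by the Picard iteration $u_n = S(f,u_{n-1})$ starting from $u_0 = \underline{u}$ is monotone increasing and converges strongly in $V$ to a solution of the QVI \eqref{eq:QVI}, and then to identify this limit as the minimal solution $\m(f)$ in the interval $[\underline u, \overline u]$. First I would check that the iteration is well defined: $\mathbf{K}(\underline u)$ is nonempty by \eqref{ass:feasibleForMin}, so $u_1 = S(f,\underline u)$ exists, and since $\Phi$ is increasing and $u_n \geq u_{n-1}$ (to be established) we have $\mathbf{K}(u_{n-1}) \subset \mathbf{K}(u_n)$ so each subsequent VI is solvable. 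Monotonicity of the sequence comes from two ingredients: (a) $u_0 = \underline u \leq S(f,\underline u) = u_1$ since $\underline u$ is a subsolution, and (b) the comparison principle for VIs together with monotonicity of $\Phi$ — if $u_{n-1} \leq u_n$ then $\Phi(u_{n-1}) \leq \Phi(u_n)$, hence $\mathbf{K}(u_{n-1}) \subseteq \mathbf{K}(u_n)$ and therefore $S(f,u_{n-1}) \leq S(f,u_n)$, i.e. $u_n \leq u_{n+1}$, by induction. One also shows by the same comparison argument that $u_n \leq \overline u$ for all $n$: if $u_{n-1} \leq \overline u$ then $S(f,u_{n-1}) \leq S(f,\overline u) \leq \overline u$, using that $\overline u$ is a supersolution. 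So $\{u_n\}$ is increasing and bounded above by $\overline u$.

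Next I would extract a convergent limit. Coercivity and boundedness of $A$ give a uniform $V$-bound on $\{u_n\}$ (testing the VI for $u_n$ with a fixed feasible element, e.g. $v_0$, and using that $\Phi(u_n)$ has controlled norm via the increasing property and the bounds on $\underline u, \overline u$ — more carefully, since $u_n$ is order-bounded and $v^+$ is $V$-bounded by $v$, one gets $\|u_n\|_V \leq C$). Hence along a subsequence $u_n \weaklyto u$ in $V$; since the sequence is monotone the whole sequence converges weakly, and in fact the order structure plus the Hilbert lattice setup gives $u_n \nearrow u$ in the sense that $u_n \to u$ also in $H$. To upgrade to strong convergence in $V$ and to pass to the limit in the VI, I would invoke the complete continuity \eqref{ass:PhiCompletelyCts}: $\Phi(u_n) \to \Phi(u)$ strongly in $V$. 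Then using $u_n = S(f, u_{n-1})$, the standard argument (test with $v = u - (\Phi(u_n) - \Phi(u_{n-1}))$ or the Mosco-convergence of $\mathbf{K}(u_{n-1})$ to $\mathbf{K}(u)$, which follows from $\Phi(u_{n-1}) \to \Phi(u)$ in $V$) yields $u_n \to u$ strongly in $V$ and $u \in \mathbf{K}(u)$ with $\langle Au - f, u - v\rangle \leq 0$ for all $v \in \mathbf{K}(u)$, so $u \in \mathbf{Q}(f)$. Since $\underline u \leq u_n \leq \overline u$, the limit lies in $[\underline u, \overline u]$, so $u \in \mathbf{Q}(f) \cap [\underline u, \overline u]$.

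Finally, I would identify $u = \m(f)$. Let $z$ be any element of $\mathbf{Q}(f) \cap [\underline u, \overline u]$; in particular $z = S(f,z)$ and $\underline u \leq z$. By induction using monotonicity of $S(f,\cdot)$ (via monotonicity of $\Phi$ and the comparison principle): $u_0 = \underline u \leq z$, and if $u_{n-1} \leq z$ then $u_n = S(f,u_{n-1}) \leq S(f,z) = z$. Passing to the limit gives $u \leq z$. Since $z$ was arbitrary, $u$ is a lower bound for $\mathbf{Q}(f) \cap [\underline u, \overline u]$, and being itself an element of that set it is the minimal solution, i.e. $u = \m(f)$. Combined with the monotone strong convergence established above, this gives $u_n \nearrow \m(f)$ in $V$.

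The main obstacle I anticipate is the passage to the limit in step two — specifically, justifying strong $V$-convergence $u_n \to u$ and that the limit solves the QVI with the \emph{correct} constraint set $\mathbf{K}(u)$ rather than some limit set. This is where complete continuity of $\Phi$ is essential: it converts the weak convergence $u_n \weaklyto u$ into strong convergence $\Phi(u_n) \to \Phi(u)$ in $V$, which is exactly what is needed for Mosco convergence $\mathbf{K}(u_{n-1}) \xrightarrow{M} \mathbf{K}(u)$ and hence for the continuity of the VI solution map in this argument; this also explains the remark in the statement that \eqref{ass:PhiCompletelyCts} is much stronger than \eqref{ass:PhiPresOrder}. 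A secondary technical point is the uniform $V$-bound on $\{u_n\}$, which needs the order-boundedness together with the assumption $\|v^+\|_V \leq C\|v\|_V$, but this is routine.
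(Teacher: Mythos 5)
Your proposal is correct and follows essentially the same route as the paper: monotone iteration via the comparison principle, a uniform $V$-bound from testing with $v_0$, weak convergence of the monotone sequence, complete continuity of $\Phi$ to pass to the limit in the VI and upgrade to strong convergence, and identification of the limit as $\m(f)$ (the paper bounds the iterates above by $\m(f)$ directly rather than by $\overline{u}$, but this is the same induction). The only quibble is that your candidate test function $v = u - (\Phi(u_n)-\Phi(u_{n-1}))$ is not obviously feasible (it would require $u \leq \Phi(u_n)$, which is not available), but the Mosco-convergence alternative you offer works, as does the paper's choice $v_n = v^* - \Phi(u) + \Phi(u_{n-1})$ for arbitrary $v^* \leq \Phi(u)$.
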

\begin{proof}
By definition, $u_0 \leq \m(f)$. By definition of subsolution and by using the comparison principle \cite[Theorem 5.1, \S 4:5]{Rodrigues}, $u_0 \leq S(f,u_0) = u_1 \leq S(f,\m(f)) = \m(f)$. Arguing in a similar way, $u_0 \leq u_n \leq u_{n+1} \leq \m(f)$ for all $n$.

Since $\Phi$ is increasing, it follows that $v_0 \leq \Phi(u_n)$ for each $n$. Hence, we may test the VI (for $u_n$) 
\[\langle Au_n - f, u_n - v \rangle \leq 0 \quad \forall v \in V : v \leq \Phi(u_{n-1})\]
with $v_0$ and use basic manipulations to obtain a uniform bound on $u_n$, which in combination with the fact that $\{u_n \}$ is monotonic, leads to the existence of some $u \in V$ such that $u_n \weaklyto u$ in $V$ for the full sequence \cite[Lemma 2.3]{AHROC}. Now, in the VI for $u_n$, test with $v_n:=v^*-\Phi(u)+\Phi(u_{n-1})$ where $v^* \in V$ satisfies $v^* \leq \Phi(u)$. This is clearly a feasible test function such that $v_n \to v^*$ in $V$ (strongly by \eqref{ass:PhiCompletelyCts}), and we can pass to the limit in the resulting inequality to find exactly that $u \in \mathbf{Q}(f)$.
It follows also that $u \in [\underline{u},\m(f)]$ and therefore $u=\m(f)$. The strong convergence of $\{u_n\}$  is a result of \eqref{ass:PhiCompletelyCts} along with a standard continuous dependence estimate (e.g., see \cite[Equation (21)]{AHR}) 
\[\norm{u_n-u}{V} \leq C\norm{\Phi(u_{n-1})-\Phi(u)}{V}.\qedhere\]
\end{proof}
In \eqref{eq:sequenceUn}, if we instead start with the initial iterate at a supersolution of $S(f,\cdot)$, we are able to provide analogous results. 
Indeed, a similar argument to the proof of Lemma \ref{lem:convergenceToMin} proves the next lemma but note that complete continuity of $\Phi$ is not needed thanks to the structure of the unilaterally bounded constraint sets under consideration.
\begin{lem}\label{lem:uNConvergenceMax}Assume \eqref{ass:feasibleForMin}. 
Then the sequence
\begin{equation}\label{eq:sequenceUnMax}
\begin{aligned}
u_n &:= S(f,u_{n-1}),\\
u_0&:=\overline{u},
\end{aligned}
\end{equation}
satisfies $u_n \searrow \M(f)$ weakly in $V$. The convergence is strong under \eqref{ass:PhiCompletelyCts}.
\end{lem}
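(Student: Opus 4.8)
The strategy mirrors the proof of Lemma \ref{lem:convergenceToMin}, but running from the supersolution downward, and exploiting that the constraint sets are bounded \emph{above} (so monotone decreasing sequences are better behaved than monotone increasing ones). First I would establish the ordering $u_n \searrow$ (monotone decreasing) with $u_n \geq \M(f)$ for all $n$. Indeed, $u_0 = \overline{u} \geq S(f,\overline{u}) = u_1$ by the definition of supersolution, and since $\overline{u} \geq \M(f)$ we have $u_1 = S(f,\overline{u}) \geq S(f,\M(f)) = \M(f)$ by the comparison principle \cite[Theorem 5.1, \S 4:5]{Rodrigues}; iterating and using monotonicity of $S(f,\cdot)$ in its second argument gives $\M(f) \leq u_{n+1} \leq u_n \leq \overline{u}$ for all $n$.

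Next I would extract a weak limit. Because $u_n \leq \overline{u}$, each $u_n$ lies in $\mathbf{K}(u_{n-1})$, so we may test the VI for $u_n$ with a fixed feasible function; here the key point (and where the structure of unilaterally bounded sets matters, so that \eqref{ass:PhiCompletelyCts} is \emph{not} needed) is that $u_{n+1} \leq u_n \leq \Phi(u_{n-1})$, i.e.\ $u_{n+1}$ itself — or better, the limit candidate — is feasible in all the constraint sets from some index on. Testing $\langle Au_n - f, u_n - v\rangle \leq 0$ with $v = \overline{u}$ (or with $v = \M(f)$, which is feasible since $\M(f) \leq u_{n-1} \leq \Phi(u_{n-1})$ as $\M(f) \in \mathbf{K}(\M(f))$ and $\Phi$ is increasing) together with coercivity and boundedness of $A$ yields a uniform bound $\norm{u_n}{V} \leq C$. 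Combined with monotonicity this gives $u_n \weaklyto u$ in $V$ for the full sequence, for some $u \in V$ with $\M(f) \leq u \leq \overline{u}$.

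Then I would identify $u \in \mathbf{Q}(f)$. Since $\{u_n\}$ is decreasing with $u_n \weaklyto u$, and for any $v^* \leq \Phi(u)$ the test function $v_n := v^* - \Phi(u) + \Phi(u_{n-1})$ satisfies $v_n \leq \Phi(u_{n-1})$ (using that $\Phi$ is increasing, so $\Phi(u_{n-1}) \geq \Phi(u)$, hence $v_n \geq v^*$... wait, one needs $v_n \leq \Phi(u_{n-1})$, which holds iff $v^* \leq \Phi(u)$) — this $v_n$ is feasible for the VI for $u_n$. The subtlety is the convergence $v_n \to v^*$: without complete continuity we only get $\Phi(u_{n-1}) \weaklyto$ something, so strong passage to the limit in the VI is not immediate. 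Here one uses instead the weak lower semicontinuity of $u \mapsto \langle Au, u\rangle$ and the assumption \eqref{ass:PhiPresOrder}: from $u_n \searrow$ weakly to $u$ and $u \leq u_{n-1} \leq \Phi(u_{n-1})$ (for $n$ large, since $u$ is the infimum), property \eqref{ass:PhiPresOrder} gives $u \leq \Phi(u)$, so $u \in \mathbf{K}(u)$. Then, passing to the $\liminf$ in $\langle Au_n, u_n\rangle \leq \langle f, u_n - v_n\rangle + \langle Au_n, v_n\rangle$ with $v_n$ chosen feasible and converging appropriately (weakly suffices on the right once we are careful, or one argues via the standard Minty/Mosco trick since $\mathbf{K}(u_{n-1}) \to \mathbf{K}(u)$ in the Mosco sense along this monotone sequence), yields $u \in \mathbf{Q}(f)$. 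Since $u \in [\M(f), \overline{u}]$ and $\M(f)$ is the maximal solution on $[\underline{u},\overline{u}]$, we conclude $u = \M(f)$.

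Finally, the strong convergence under \eqref{ass:PhiCompletelyCts} follows exactly as in Lemma \ref{lem:convergenceToMin}: the continuous dependence estimate $\norm{u_n - u}{V} \leq C\norm{\Phi(u_{n-1}) - \Phi(u)}{V}$ (cf.\ \cite[Equation (21)]{AHR}) combined with $u_{n-1} \weaklyto u$ and complete continuity of $\Phi$ forces the right-hand side to zero. I expect the main obstacle to be step three — passing to the limit in the VI to show $u$ solves the QVI \emph{without} complete continuity — since the test functions $v_n$ built from $\Phi(u_{n-1})$ converge only weakly; the resolution is to avoid needing strong convergence of $v_n$ by invoking \eqref{ass:PhiPresOrder} to get feasibility of $u$ directly and the weak lower semicontinuity of the quadratic form for the variational inequality itself, which is precisely the "structure of the unilaterally bounded constraint sets" alluded to in the statement.
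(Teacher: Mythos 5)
Your proposal is correct and follows essentially the same route as the paper: monotone decrease via the comparison principle, a uniform bound from a fixed feasible test function, and the key observation that the weak limit $u$ of the decreasing sequence lies below every $u_j$, so (with $\Phi$ increasing) any fixed $v^* \leq \Phi(u)$ is feasible for every VI in the sequence — which is exactly why complete continuity is dispensable — with \eqref{ass:PhiPresOrder} supplying $u \leq \Phi(u)$. One small caution: $\overline{u}$ need not belong to $\mathbf{K}(u_{n-1})$, so for the a priori bound you should use your alternative test function $\M(f)$ (justified, as you note, by $\M(f) \leq \Phi(\M(f)) \leq \Phi(u_{n-1})$) or, as the paper does, the element $v_0$ from \eqref{ass:feasibleForMin}.
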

\begin{proof}
By definition, $u_0 \geq \M(f)$. By definition of supersolution and by using the comparison principle, $u_0 \geq S(f,u_0) = u_1 \geq S(f,\M(f)) = \M(f)$ and hence $u_0 \geq u_n \geq u_{n+1} \geq \M(f)$ for all $n$.

Clearly, $u_n \geq \underline{u}$, thus  
it follows from \eqref{ass:feasibleForMin} that $v_0 \leq \Phi(u_n)$ for each $n$. Hence, we may test the VI for $u_n$ with $v_0$ to obtain a uniform bound on $u_n$ and therefore we have $u_n \weaklyto u$ in $V$ for some $u$ (due to the monotonicity of $\{u_n\}$, as in the previous proof).

Now, take an arbitrary $v^* \in V$ with $v^* \leq \Phi(u)$. Since $\{u_n\}$ is decreasing, we have $u_n \leq u_j$ for all $n \geq j$ with $j$ fixed, i.e., $u_j-u_n \in V_+$. Passing to the limit $n \to \infty$ (using the fact that $V_+$ is closed and convex, hence weakly sequentially closed), we find $u_j - u \in V_+$ so that $u \leq u_j$. Using the fact that $\Phi$ is increasing, we see that $v^* \leq \Phi(u_j)$ for any $j \in \mathbb{N}$, i.e., $v^*$ is a valid test function for the VI for $u_n$. Taking the limit in the VI for $u_n$ allows us to deduce that $u$ satisfies
\[\langle Au-f, u - v \rangle \leq 0 \quad \forall v \in V : v \leq \Phi(u).\]
To conclude that $u \in \mathbf{Q}(f)$, we still need to show that $u \leq \Phi(u)$, but this is a consequence of \eqref{ass:PhiPresOrder}.

It follows also that $u \in [\M(f), \overline{u}]$ and therefore $u=\M(f)$.  The statement regarding strong convergence follows again by the continuous dependence estimate as above.
\end{proof}

\section{The minimal solution map}\label{sec:min}
We will show in this section that the minimal solution map is directionally differentiable under some assumptions by utilising the strategy explained in the previous section. 

As before, we take $\underline{u}$ to be a subsolution for $S(f,\cdot)$. Let $s \geq 0$ be a small parameter and take a direction $d \in V^*_+$. Since $\underline{u} \leq S(f,\underline{u}) \leq S(f+sd, \underline{u})$, $\underline{u}$ is also a subsolution for $S(f+sd,\cdot)$. In the other direction, we suppose, as a standing assumption in this section, that 
\begin{equation}\label{ass:supersolutionForPerturbedProblem}
\text{there exists $s_0 > 0$ such that for all $s \leq s_0$, $\overline{u}$ is a supersolution for $S(f+sd, \cdot)$}
\end{equation}
($\bar u$ should remain independent of $s$). 
Then, by the argument in \S \ref{sec:existenceOfExtremals}, we have the non-emptiness of the set $\mathbf{Q}(f+sd)\cap [\underline{u}, \overline{u}]$. Furthermore,  $\m(f+sd)$ exists on $[\underline{u}, \overline{u}]$ (for small $s$).
\begin{remark}\label{rem:1}
Asking for $\overline{u}$ to be a supersolution for the perturbed problem (parametrised by $s$) is not a stringent requirement since any supersolution for $S(f+sd,\cdot)$ is also a supersolution for $S(f+rd,\cdot)$ for $s \geq r \geq 0$, and because we will be sending $s \to 0$, we may always start by taking, for example, $\overline{u}$ to be a supersolution of $S(f+d,\cdot)$.
\end{remark}
\begin{eg}\label{eg:ofSupersolution}
As indicated in Remark \ref{rem:subAndSupSolns}, a concrete choice of a supersolution satisfying \eqref{ass:supersolutionForPerturbedProblem} is 
\[\overline{u} = A^{-1}(f+d).\]
\end{eg}
\begin{lem}\label{lem:NEWminComparison}Let $d \in V^*_+$ and  \eqref{ass:feasibleForMin} 
hold. Then $\m(f+sd) \geq \m(f)$.
\end{lem}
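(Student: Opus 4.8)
The plan is to compare the monotone iteration schemes that converge to $\m(f+sd)$ and to $\m(f)$ respectively, exploiting the comparison principle for VIs at each step. Concretely, fix $d \in V^*_+$ and a small $s \geq 0$, and define two sequences of VI solutions both started at the common subsolution $\underline{u}$: set $u_0 = \underline{u}$, $u_n := S(f, u_{n-1})$ (the iteration from Lemma~\ref{lem:convergenceToMin}), and $w_0 = \underline{u}$, $w_n := S(f+sd, w_{n-1})$. Since $\underline{u}$ is a subsolution for both $S(f,\cdot)$ and $S(f+sd,\cdot)$ (as noted at the start of \S\ref{sec:min}, using $d \geq 0$), both schemes are well defined and increasing. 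The key structural fact is monotone dependence of $S$ in \emph{both} arguments: $S$ is increasing in the source (by the comparison principle \cite[Theorem 5.1, \S 4:5]{Rodrigues}, since $f \leq f+sd$) and increasing in the obstacle argument (because $\Phi$ is increasing and, again, by the comparison principle applied to the two constraint sets).

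First I would establish, by induction on $n$, that $u_n \leq w_n$ for every $n$: the base case is $u_0 = w_0 = \underline{u}$, and for the inductive step, $u_n = S(f, u_{n-1}) \leq S(f+sd, u_{n-1}) \leq S(f+sd, w_{n-1}) = w_n$, where the first inequality uses monotonicity in the source and the second uses $u_{n-1} \leq w_{n-1}$ together with monotonicity in the obstacle. Next I would invoke the convergence results already proved: under \eqref{ass:feasibleForMin}, the scheme $\{u_n\}$ (resp.\ $\{w_n\}$) converges to $\m(f)$ (resp.\ $\m(f+sd)$); for the $u_n$ this is essentially Lemma~\ref{lem:convergenceToMin} but that lemma requires the stronger complete-continuity hypothesis \eqref{ass:PhiCompletelyCts}, so I would instead argue more directly. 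Passing to the (weak) limit in $u_n \leq w_n$ using that $V_+$ is weakly sequentially closed gives $\lim u_n \leq \lim w_n$, and it remains to identify these limits as the minimal solutions.

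The main obstacle, then, is the identification $\lim_n u_n = \m(f)$ (and likewise for $w_n$) \emph{without} assuming \eqref{ass:PhiCompletelyCts}. I would handle this as follows: by the same argument as in Lemma~\ref{lem:convergenceToMin}, $\{u_n\}$ is increasing and bounded above by $\m(f)$, so $u_n \weaklyto u \leq \m(f)$ in $V$ for some $u$, and $u \geq \underline{u}$. The delicate point is showing $u \in \mathbf{Q}(f)$ so that minimality forces $u = \m(f)$. Here I would use that $\{u_n\}$ is increasing, so for fixed $j$ and all $n \geq j$ we have $u_j \leq u_n$, hence (passing to the weak limit, $V_+$ weakly closed) $u_j \leq u$ for all $j$; combined with $\Phi$ increasing this gives, for any $v^* \leq \Phi(u)$, that $v^* \leq \Phi(u_j)$, so $v^*$ is admissible for the VI defining $u_{j+1} = S(f,u_j)$. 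Taking the limit in that VI (the argument mirrors the proof of Lemma~\ref{lem:uNConvergenceMax}, using weak lower semicontinuity of $v \mapsto \langle Av, v\rangle$ on the coercive part and continuity of the linear terms) shows $u$ solves the VI with obstacle $\Phi(u)$, and \eqref{ass:PhiPresOrder} applied to the decreasing-to-$u$... — actually here $\{u_n\}$ is increasing, so instead I note $u_n \leq \Phi(u_{n-1}) \leq \Phi(u)$ passes to the limit directly via weak closedness to give $u \leq \Phi(u)$, whence $u \in \mathbf{Q}(f) \cap [\underline{u}, \overline{u}]$ and $u = \m(f)$ by minimality. The identical reasoning applies to $\{w_n\}$ with source $f+sd$, giving $\lim w_n = \m(f+sd)$. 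Chaining everything: $\m(f) = \lim u_n \leq \lim w_n = \m(f+sd)$, as claimed. A technically cleaner alternative, which I would mention, is simply to cite Lemma~\ref{lem:uNConvergenceMax}-style reasoning verbatim (it does not need complete continuity) after observing that the whole identification argument there only used the unilateral structure of $\mathbf{K}$, not whether the iteration starts from a sub- or supersolution.
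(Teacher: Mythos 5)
There is a genuine gap in the main (non-completely-continuous) part of your argument, and it occurs exactly where you flag the ``delicate point''. For your \emph{increasing} iteration $u_n = S(f,u_{n-1})$, $u_0 = \underline{u}$, you have $u_j \leq u$ for the weak limit $u$, and since $\Phi$ is increasing this gives $\Phi(u_j) \leq \Phi(u)$ --- the inequality points the \emph{wrong} way. Hence $v^* \leq \Phi(u)$ does \emph{not} imply $v^* \leq \Phi(u_j)$, so a test function for the limit problem need not be admissible for the VI defining $u_{j+1}$, and you cannot pass to the limit in the VI as you propose. This is precisely why Lemma \ref{lem:convergenceToMin} (increasing iteration to $\m(f)$) requires the complete continuity \eqref{ass:PhiCompletelyCts}: there one must use the shifted test functions $v_n = v^* - \Phi(u) + \Phi(u_{n-1})$, whose strong convergence to $v^*$ needs $\Phi(u_{n-1}) \to \Phi(u)$ strongly in $V$ from only weak convergence of $u_{n-1}$. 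By contrast, in Lemma \ref{lem:uNConvergenceMax} the iteration is \emph{decreasing}, so $u \leq u_j$, $\Phi(u) \leq \Phi(u_j)$, and every $v^* \leq \Phi(u)$ is automatically feasible for each iterate. Your closing remark that the identification in Lemma \ref{lem:uNConvergenceMax} ``only used the unilateral structure of $\mathbf{K}$, not whether the iteration starts from a sub- or supersolution'' is therefore incorrect: the direction of monotonicity is essential, and note also that assumption \eqref{ass:PhiPresOrder} is itself formulated only for decreasing sequences. Your first paragraph (two increasing iterations from $\underline{u}$ with $u_n \leq w_n$ by induction) does reproduce the paper's easy case under \eqref{ass:PhiCompletelyCts}, but it does not prove the lemma as stated.

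The paper circumvents this by reversing the roles: it runs a \emph{decreasing} iteration with the \emph{smaller} source, namely $y_0 := \m(f+sd)$ and $y_n := S(f, y_{n-1})$. Since $y_1 = S(f,y_0) \leq S(f+sd,y_0) = y_0$, the sequence decreases; each $y_n \geq \underline{u}$, so \eqref{ass:feasibleForMin} yields a uniform bound and $y_n \weaklyto y$ with $y \in \mathbf{Q}(f)$ by exactly the Lemma \ref{lem:uNConvergenceMax} argument (where the decreasing structure makes all limit test functions feasible and \eqref{ass:PhiPresOrder} applies). Then $y \in [\underline{u}, \m(f+sd)] \subset [\underline{u},\overline{u}]$ forces $\m(f) \leq y \leq \m(f+sd)$. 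If you want to salvage your two-sequence comparison, you would have to either assume \eqref{ass:PhiCompletelyCts} (weakening the lemma) or find another mechanism to identify the limit of an increasing iteration as a QVI solution; the paper's one-sided, decreasing construction avoids the problem entirely.
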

\begin{proof}
If $\Phi$ is, in addition, completely continuous, this is easy to show. Indeed, defining
\begin{align*}
y^s_n&:=S(f+sd,y^s_{n-1}),\\
y^s_0 &:= \underline{u},
\end{align*}
and with $u_n$ defined as in \eqref{eq:sequenceUn}, we see that $y^s_1 \geq u_1$ since $d \geq 0$. This implies that $y^s_n \geq u_n$ and hence, passing to the limit with the help of Lemma \ref{lem:convergenceToMin} , we have $\m(f+sd) \geq \m(f)$.

Let us prove the lemma (without the additional assumption). Define the sequence
\begin{align*}
y_n &:= S(f, y_{n-1}),\\
y_0 &:= \m(f+sd),
\end{align*}
and note that $y_1=S(f,y_0) \leq S(f+sd, y_0) = y_0$. This implies that $\{y_n\}$ is a decreasing sequence.

Since $y_0$ is the minimal solution on $[\underline{u}, \overline{u}]$, $y_0 \geq \underline{u}$ and from $y_1=S(f, y_0) \geq S(f,\underline{u}) \geq \underline{u}$ (by definition of subsolution), each $y_n \geq \underline{u}$.  This implies that $v_0$ from assumption \eqref{ass:feasibleForMin} satisfies $v_0 
\leq \Phi(y_n)$ for each $n$, and hence is a feasible test function for the VI for $y_n$. Just like in the proof of Lemma \ref{lem:convergenceToMin}, testing with $v_0$, we easily obtain a uniform bound on $\{y_n\}$ in $V$, and therefore we have $y_n \weaklyto y$ in $V$ for some $y$, which in addition to the fact that $\{y_n \}$ is decreasing, leads, via \eqref{ass:PhiPresOrder}, to
\[y_n \weaklyto y \in \mathbf{Q}(f)\]
in exactly the same way as in the proof of Lemma \ref{lem:uNConvergenceMax}. 


Furthermore, we have seen that $y \in [\underline{u}, y_0] = [\underline{u}, \m(f+sd)] \subset [\underline{u}, \overline{u}]$. This, along with $y \in \mathbf{Q}(f)$ implies that $\m(f) \leq y \leq \m(f+sd)$.
\end{proof}

Now, let us define (as sketched at the start of \S \ref{sec:strategy}) a sequence starting at $\m(f)$ with perturbed source term as follows:
\begin{equation}\label{eq:sequenceUsn}
\begin{aligned}
u^s_n &:= S(f+sd, u^s_{n-1}),\\
u^s_0 &:= \m(f).
\end{aligned}
\end{equation}
It is worth remarking that since $\m(f)$ acts as a subsolution for $S(f+sd,\cdot)$, if $\Phi$ is completely continuous, $u_n^s \nearrow \m|_{[\m(f),\overline{u}]}(f+sd)$ by Lemma \ref{lem:convergenceToMin} (the notation $\m|_A(f+sd)$ refers to the minimal solution on $A$). But in fact, the limit is the minimal solution on the full interval $[\underline{u}, \overline{u}]$ as the next result shows (which, we note, dispenses with the need for complete continuity). 
\begin{prop}\label{prop:usnConvMin}Let $d \in V^*_+$, \eqref{ass:feasibleForMin} and 
\begin{equation}
\text{there exists $\epsilon > 0$ such that  } \Phi\colon B_\epsilon(\m(f)) \to V \text{ is Lipschitz with Lipschitz constant $C_\Phi < C_a\slash(C_a+C_b)$}\label{ass:PhiLipBoundm}
\end{equation}
hold. Then $u^s_n \nearrow \m(f+sd)$ in $V$.
\end{prop}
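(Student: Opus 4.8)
The plan is to combine the comparison principle for variational inequalities (to control the order structure of the iterates), the Banach fixed point convergence established in \cite[Proposition 3.7]{AHROC} (to obtain \emph{strong} convergence of the iterates to some element of $\mathbf{Q}(f+sd)$ without any compactness), and Lemma \ref{lem:NEWminComparison} together with the minimality of $\m(f+sd)$ on $[\underline u,\overline u]$ (to identify the limit). Throughout, recall that since $\Phi$ is increasing, $\varphi_1\le\varphi_2$ gives $\mathbf K(\varphi_1)\subseteq\mathbf K(\varphi_2)$, so by the comparison principle the map $S(f+sd,\cdot)$ is order-preserving; also, $\m(f+sd)$ is a fixed point of $S(f+sd,\cdot)$.

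First I would show that $\{u^s_n\}$ is increasing and order-bounded between $\m(f)$ and $\overline u$. Since $d\ge 0$, the comparison principle gives $u^s_0=\m(f)=S(f,\m(f))\le S(f+sd,\m(f))=u^s_1$; applying the order-preserving map $S(f+sd,\cdot)$ repeatedly yields $u^s_n\le u^s_{n+1}$ for all $n$. For the upper bound, $\m(f)\le\overline u$ together with the standing assumption \eqref{ass:supersolutionForPerturbedProblem} (valid for $s\le s_0$) give $u^s_1=S(f+sd,\m(f))\le S(f+sd,\overline u)\le\overline u$, whence $u^s_n\le\overline u$ for all $n$ by induction. Thus $\m(f)\le u^s_n\le u^s_{n+1}\le\overline u$ for every $n$.

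Next, for $s$ sufficiently small, I would invoke \cite[Proposition 3.7]{AHROC}, which under the local Lipschitz hypothesis \eqref{ass:PhiLipBoundm} on $\Phi$ near the base point $\m(f)\in\mathbf Q(f)$ gives $u^s_n\to u^s$ strongly in $V$ for some $u^s\in\mathbf Q(f+sd)$. Passing to the limit in the order relations of the previous step (and using $\underline u\le\m(f)$) shows moreover $u^s\in[\underline u,\overline u]$, so minimality forces $u^s\ge\m(f+sd)$. For the opposite inequality, an induction on $n$ proves $u^s_n\le\m(f+sd)$: the base case $u^s_0=\m(f)\le\m(f+sd)$ is exactly Lemma \ref{lem:NEWminComparison}, and if $u^s_{n-1}\le\m(f+sd)$ then $u^s_n=S(f+sd,u^s_{n-1})\le S(f+sd,\m(f+sd))=\m(f+sd)$ by order-preservation and the fixed point property. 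Letting $n\to\infty$ gives $u^s\le\m(f+sd)$, hence $u^s=\m(f+sd)$, and the strong, monotone convergence $u^s_n\nearrow\m(f+sd)$ in $V$ follows.

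The only genuinely non-routine point is the appeal to the strong-convergence result \cite[Proposition 3.7]{AHROC}, and it is essential here: for an \emph{increasing} sequence of iterates one cannot in general pass to the limit directly in the VI defining $u^s_n$, because a candidate test function $v\le\Phi(u^s)$ need not satisfy $v\le\Phi(u^s_{n-1})$ — this is precisely the obstruction that forced the complete-continuity assumption in Lemma \ref{lem:convergenceToMin}. The contraction/Banach fixed point estimate sidesteps this by producing strong convergence (hence membership of the limit in $\mathbf Q(f+sd)$) using only \eqref{ass:PhiLipBoundm}; everything else is careful bookkeeping with the comparison principle.
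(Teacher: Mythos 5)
Your argument is correct and follows essentially the same route as the paper: invoke \cite[Proposition 3.7]{AHROC} under \eqref{ass:PhiLipBoundm} for strong convergence $u^s_n\to u^s\in\mathbf Q(f+sd)$, use monotonicity of the iterates and Lemma \ref{lem:NEWminComparison} with the comparison principle to trap $u^s$ in $[\underline u,\m(f+sd)]$, and conclude by minimality. Your write-up simply makes explicit some bookkeeping (the upper bound by $\overline u$ and the induction) that the paper leaves implicit.
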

\begin{proof}
We have $u_n^s \to u^s$ in $V$ for some $u^s \in \mathbf{Q}(f+sd)$ by \cite[Proposition 3.7]{AHROC}, thanks to \eqref{ass:PhiLipBoundm} (as explained at the start of \S \ref{sec:strategy}). Since $d$ is non-negative, the sequence $\{u_n^s\}$ is increasing 
and hence $u^s \in [\m(f), \infty)$, but we also see that $u^s_1 = S(f+sd, \m(f)) \leq S(f+sd, \m(f+sd))=\m(f+sd)$ (with the inequality by Lemma \ref{lem:NEWminComparison}) and thus $u^s \in [\m(f), \m(f+sd)] \subset [\underline{u}, \m(f+sd)]$. The claim follows.
%
%
\end{proof}
In the above proposition, if $\Phi\colon V \to V$ is completely continuous, then \eqref{ass:PhiLipBoundm} can be omitted. 
With all the preparations complete, we are ready to state the differentiability result.

\begin{theorem}[Directional differentiability of $\m$]\label{thm:min}
For $f \in V^*$, let $\underline{u}$ be a subsolution for $S(f,\cdot)$ and given $d \in V^*_+$, let $\overline{u}$ satisfy \eqref{ass:supersolutionForPerturbedProblem} with $\underline{u} \leq \overline{u}$.
 In addition to \eqref{ass:feasibleForMin} 
and \eqref{ass:PhiLipBoundm}, assume that 
\begin{align}
&\text{$\Phi\colon V \to V$ is Hadamard directionally differentiable at $\m(f)$.}\label{ass:PhiHadamardAtYm}
\end{align}
Then the map $\m\colon V^* \to V$ is directionally differentiable at $f$ in the direction $d$:  
\begin{equation*}
\lim_{s \to 0^+}\frac{\m(f+sd)-\m(f)}{s} = \m'(f)(d).
\end{equation*}
Furthermore, $\m'(f)(d)$ satisfies the QVI
\begin{equation}\label{eq:QVIforAlpha}
\begin{aligned}
\alpha \in \mathcal{K}_\m(\alpha) &: \langle A\alpha - d, \alpha - v \rangle \leq 0 \quad \forall v \in \mathcal{K}_\m(\alpha),\\
\mathcal{K}_\m(\alpha) &:= \Phi'(\m(f))(\alpha)  + \mathcal{T}_{\mathbf K(\m(f))}(\m(f)) \cap [f-A\m(f)]^\perp.  
\end{aligned}
\end{equation}			
\end{theorem}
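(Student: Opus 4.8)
The plan is to apply Theorem~\ref{thm:dirDiff1} at the specific solution $u := \m(f) \in \mathbf{Q}(f)$, and then argue that the selection $u^s$ produced by that theorem coincides with the minimal solution $\m(f+sd)$ on $[\underline{u},\overline{u}]$. The hypotheses \eqref{ass:PhiLipBoundm} and \eqref{ass:PhiHadamardAtYm} are exactly \eqref{ass:PhiLipBound} and \eqref{ass:PhiHadamardAtY} specialised to $u=\m(f)$, so Theorem~\ref{thm:dirDiff1} gives us $u^s \in \mathbf{Q}(f+sd)\cap B_R(\m(f))$ and $\alpha \in V$ with $u^s = \m(f) + s\alpha + o(s)$, where $\alpha$ solves the QVI \eqref{eq:QVIforAlpha} (this is \eqref{eq:QVIforAlphaGeneral} with $u=\m(f)$). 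So the entire content of the proof reduces to the identification $u^s = \m(f+sd)$.

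To establish that identification, I would recall from the discussion at the start of \S\ref{sec:strategy} and from \cite[Proposition 3.7]{AHROC} that the $u^s$ furnished by Theorem~\ref{thm:dirDiff1} is precisely the limit of the iteration $u^s_n := S(f+sd,u^s_{n-1})$ with $u^s_0 := \m(f)$ — that is, the sequence defined in \eqref{eq:sequenceUsn}. Here the hypothesis that $u^s_0 = \m(f)$ (rather than $\underline{u}$) matters: $\m(f)$ is a subsolution for $S(f+sd,\cdot)$ since $\m(f) = S(f,\m(f)) \le S(f+sd,\m(f))$ using $d \ge 0$ and the comparison principle. Then Proposition~\ref{prop:usnConvMin} — whose hypotheses $d \in V^*_+$, \eqref{ass:feasibleForMin}, \eqref{ass:PhiLipBoundm} are all among the assumptions of the present theorem — yields $u^s_n \nearrow \m(f+sd)$ in $V$. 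Since limits are unique, $u^s = \m(f+sd)$.

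Combining the two pieces: for $s$ small enough (so that $s \le s_0$ from \eqref{ass:supersolutionForPerturbedProblem}, guaranteeing $\m(f+sd)$ exists on $[\underline{u},\overline{u}]$, and $0 < R \le \epsilon$ as in Theorem~\ref{thm:dirDiff1}), we have
\[
\m(f+sd) = u^s = \m(f) + s\alpha + o(s),
\]
so $\dfrac{\m(f+sd)-\m(f)}{s} = \alpha + \dfrac{o(s)}{s} \to \alpha$ in $V$ as $s \to 0^+$, which is exactly directional differentiability with $\m'(f)(d) = \alpha$. The characterisation \eqref{eq:QVIforAlpha} is then inherited verbatim from \eqref{eq:QVIforAlphaGeneral}.

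\textbf{Main obstacle.} The only genuinely delicate point is the claim that the selection mechanism inside the proof of Theorem~\ref{thm:dirDiff1} actually produces the limit of \eqref{eq:sequenceUsn} when initialised at $u$, rather than some other element of $\mathbf{Q}(f+sd)\cap B_R(u)$; in other words, one must be sure the $o(s)$ remainder and the iterate limit are the \emph{same} $u^s$. This requires inspecting the construction in \cite{AHROC} (steps (i)–(iii) recalled in \S\ref{sec:strategy}) to confirm that the fixed point obtained there is indeed $\lim_n u^s_n$ with $u^s_0 = u$ — which the excerpt asserts via \cite[Proposition 3.7]{AHROC} — and then noting that this is compatible with choosing $u = \m(f)$. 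Everything else is bookkeeping: checking $\m(f)$ is a subsolution for the perturbed problem, checking the smallness of $s$, and invoking Proposition~\ref{prop:usnConvMin}. I would also remark, as the paper does, that under complete continuity of $\Phi$ the Lipschitz hypothesis \eqref{ass:PhiLipBoundm} can be dropped, since then Lemma~\ref{lem:convergenceToMin} and the version of Proposition~\ref{prop:usnConvMin} stated there already give $u^s_n \nearrow \m(f+sd)$.
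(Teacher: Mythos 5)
Your proposal is correct and follows essentially the same route as the paper: apply Theorem~\ref{thm:dirDiff1} at $u=\m(f)$, identify the resulting $u^s$ with the limit of the iteration \eqref{eq:sequenceUsn}, and invoke Proposition~\ref{prop:usnConvMin} to conclude $u^s=\m(f+sd)$. Your explicit flagging of the delicate point (that the $u^s$ from the expansion and the limit of the iterates are the same element, via \cite[Proposition 3.7]{AHROC}) is exactly the content the paper compresses into ``the proof is as described at the start of this section.''
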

\begin{proof}
The proof is as described at the start of this section. Indeed, a straightforward application of Theorem \ref{thm:dirDiff1} gives the existence of $\alpha \in V$ satisfying \eqref{eq:QVIforAlphaGeneral} such that the limit $u^s$ of $\{u^s_n\}$ (defined in \eqref{eq:sequenceUsn}) satisfies
\[u^s = \m(f) + s\alpha + o(s).\]
Proposition \ref{prop:usnConvMin} can be applied and it tells us that $u^s = \m(f+sd)$.
\end{proof}
We reiterate that assumption  \eqref{ass:supersolutionForPerturbedProblem} in the above theorem is satisfied by simply taking $\overline{u} = A^{-1}(f+d)$ as explained in Example \ref{eg:ofSupersolution} and hence this is, in some sense, not an assumption at all; we include it for full generality and in order to have $\m$ clearly defined (recall that $\m$ and $\M$ require the identification of an ordered interval to be sensible).

The QVI \eqref{eq:QVIforAlpha} satisfied by the derivative $\alpha$  
is determined as the monotone limit of the sequence $\{\alpha_n\}$ (see \eqref{eq:expansionFormulaN}) of solutions of VIs where each $\alpha_n$ satisfies
\begin{align*}
\nonumber \alpha_n \in \mathcal{K}_{\m}(\alpha_{n-1}) &: \langle A\alpha_n - d, \alpha_n - \varphi \rangle \leq 0 \quad \forall \varphi \in \mathcal{K}_\m(\alpha_{n-1}).
\end{align*}
A direct consequence of the monotonicity of $\{u^s_n\}$ allows us to conclude that $\alpha^n \nearrow \alpha$ in $V$.
\section{The maximal solution map}
This section is almost a mirror image of \S \ref{sec:min} but not entirely because our constraint set is unilateral, that is, it involves (only) the $\leq$ operator. Here, we reverse the sign of the direction term in order to enforce monotonicity of a certain sequence.

We start with $\overline{u}$ a given supersolution of $S(f,\cdot)$. Again, let us take $s \geq 0$. Observe that for any $d \in V^*$ with $d \leq 0$, $\overline{u}$ is a supersolution for $S(f+sd, \cdot)$ too: $\overline{u} \geq S(f, \overline{u}) \geq S(f+sd, \overline{u})$ by the sign on $d$. Akin to the previous section, we are going to make the standing assumption
\begin{equation}\label{eq:subsolutionForPerturbedProblem}
\text{there exists $s_0 > 0$ such that for all $s \leq s_0$, $\underline{u}$ is a subsolution for $S(f+sd, \cdot)$}
\end{equation}
so that $\mathbf{Q}(f+sd) \cap [\underline{u}, \overline{u}]$ is non-empty and $\M(f+sd)$ is well defined on $[\underline{u}, \overline{u}]$.
\begin{remark}
This assumption is more inconvenient  to fulfill than the corresponding condition \eqref{ass:supersolutionForPerturbedProblem} for the minimal solution map. Nevertheless,  if $\Phi(0) \geq 0$ and there exists an $r_0 \geq 0$ such that $f+r_0d \geq 0$, then $0$ is a subsolution satisfying \eqref{eq:subsolutionForPerturbedProblem} for sufficiently small $s$. Indeed, for $s \leq r_0$, we have $S(f+sd, 0) \geq S(f+r_0d, 0) \geq 0$.
\end{remark}
\begin{lem}\label{lem:maxMapComparison}Let $d \in -V^*_+$, 
 and  \eqref{ass:feasibleForMin} 
 hold. Then $\M(f) \geq \M(f+sd)$.
\end{lem}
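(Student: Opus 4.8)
The plan is to mirror the \emph{easy} half of the proof of Lemma~\ref{lem:NEWminComparison}, which in the maximal case happens to work in full generality: because the constraint sets are unilateral, Lemma~\ref{lem:uNConvergenceMax} provides a monotone iteration from the supersolution that converges to $\M(f)$ \emph{without} any compactness assumption on $\Phi$, and since $\M(f+sd)$ is itself a fixed point of $S(f+sd,\cdot)$ there is no need for a separate limiting argument on its side (which is precisely what made the corresponding step for $\m$ delicate).

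First I would fix $s \le s_0$, so that by the standing assumption \eqref{eq:subsolutionForPerturbedProblem} --- together with $\overline u \ge S(f,\overline u) \ge S(f+sd,\overline u)$, using $d \le 0$, $s \ge 0$ and the comparison principle --- the maximal solution $\M(f+sd)$ is well defined on $[\underline u,\overline u]$; in particular $\M(f+sd) \le \overline u$. Next I set $u_0 := \overline u$ and $u_n := S(f,u_{n-1})$. Lemma~\ref{lem:uNConvergenceMax} applies to the datum $f$ (its hypotheses hold: $\overline u$ is a supersolution and $\underline u$ a subsolution of $S(f,\cdot)$ --- the latter because $S(f,\underline u) \ge S(f+sd,\underline u) \ge \underline u$ --- with $\underline u \le \overline u$, and \eqref{ass:feasibleForMin}, \eqref{ass:PhiPresOrder} are in force), giving $u_n \searrow \M(f)$ weakly in $V$.

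I then claim $\M(f+sd) \le u_n$ for every $n$, and I would prove this by induction. The base case is $\M(f+sd) \le \overline u = u_0$. For the inductive step, using that $\M(f+sd)$ is a fixed point of $S(f+sd,\cdot)$, that $f+sd \le f$ yields $S(f+sd,z) \le S(f,z)$ for every $z$, and that $S(f,\cdot)$ is increasing in the obstacle argument --- the last two monotonicities both being instances of the comparison principle \cite[Theorem~5.1, \S~4:5]{Rodrigues} --- one obtains
\[
\M(f+sd) = S(f+sd,\M(f+sd)) \le S(f,\M(f+sd)) \le S(f,u_{n-1}) = u_n ,
\]
the final inequality by the induction hypothesis $\M(f+sd) \le u_{n-1}$.

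Finally I would pass to the limit $n\to\infty$: since $u_n - \M(f+sd) \in V_+$ for all $n$ and $u_n \weaklyto \M(f)$, the weak sequential closedness of the closed convex cone $V_+$ gives $\M(f) - \M(f+sd) \in V_+$, that is, $\M(f) \ge \M(f+sd)$. The only point requiring a little care is being entitled to use the comparison principle for monotonicity in the \emph{source} term (not merely the obstacle) and to invoke Lemma~\ref{lem:uNConvergenceMax} for the unperturbed datum $f$; there is no substantial obstacle here, the real difficulty of the analogous lemma for $\m$ having been circumvented by the unilateral structure of $\mathbf{K}$.
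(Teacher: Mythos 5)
Your proof is correct, and it is a mild but genuine variant of the paper's argument. The paper runs \emph{two} monotone iterations from $\overline{u}$ --- one with datum $f$ converging to $\M(f)$ and one with datum $f+sd$ converging to $\M(f+sd)$, both via Lemma~\ref{lem:uNConvergenceMax} --- proves $u_n \geq y_n^s$ termwise, and passes both sequences to their weak limits. You instead run only the unperturbed iteration $u_n = S(f,u_{n-1})$ and compare it directly against the \emph{fixed point} $\M(f+sd)$, using $\M(f+sd)=S(f+sd,\M(f+sd)) \leq S(f,\M(f+sd)) \leq S(f,u_{n-1})$ in the induction step. This buys you a small economy: you invoke Lemma~\ref{lem:uNConvergenceMax} (and hence the identification of a limit as a maximal solution, which relies on \eqref{ass:PhiPresOrder}) only once, for the datum $f$, whereas the paper needs it also for $f+sd$ to identify $\lim_n y_n^s = \M(f+sd)$; on the perturbed side you need only the \emph{existence} of $\M(f+sd)$ on $[\underline{u},\overline{u}]$, which is exactly what the standing assumption \eqref{eq:subsolutionForPerturbedProblem} guarantees and which you correctly flag. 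All the individual steps (monotonicity of $S$ in the source and in the obstacle via the comparison principle, weak sequential closedness of $V_+$ to pass the ordering to the limit) are the same tools the paper uses, so there is no gap.
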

\begin{proof}
Define
\begin{align*}
y_n^s &:= S(f+sd, y_{n-1}^s),\\
y_0^s &:= \overline{u}. 
\end{align*}
Taking the sequence $\{u_n\}$ from \eqref{eq:sequenceUnMax}, it follows that $u_1 \geq y_1^s$ and therefore $u_n \geq y_n^s$. Passing to the weak limit in this inequality by using Lemma \ref{lem:uNConvergenceMax}, we see that $\M(f) \geq \M(f+sd)$ (note that \eqref{eq:subsolutionForPerturbedProblem} is needed to obtain $y_n^s \geq \underline{u}$ and hence also $\lim_n y_n^s \geq \underline{u}$).
\end{proof}
It is not difficult to see that $\M(f)$ is a supersolution for $S(f+sd,\cdot)$ for non-positive $d$. This allows us to construct a perturbed sequence starting at $\M(f)$ and we obtain the next result.
\begin{prop}\label{prop:usnConvMax}Let  $d \in -V_+^*$,  
\eqref{ass:feasibleForMin} and
\begin{equation}
\text{there exists $\epsilon > 0$ such that  } \Phi\colon B_\epsilon(\M(f)) \to V \text{ is Lipschitz with Lipschitz constant $C_\Phi < C_a\slash(C_a+C_b)$}\label{ass:PhiLipBoundM}
\end{equation}
hold. With
\begin{align*}
u^s_n &:= S(f+sd, u^s_{n-1}),\\
u^s_0 &:= \M(f),
\end{align*}
we have $u^s_n \searrow \M(f+sd)$ in $V$.
\end{prop}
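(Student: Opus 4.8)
The plan is to mirror the proof of Proposition \ref{prop:usnConvMin} in \S \ref{sec:min}, with the roles of super- and subsolutions reversed, and with the sign of $d$ flipped to keep the relevant monotonicity. First I would observe that the Lipschitz bound \eqref{ass:PhiLipBoundM} lets us invoke \cite[Proposition 3.7]{AHROC} (exactly as at the start of \S \ref{sec:strategy}), which yields $u_n^s \to u^s$ in $V$ for some $u^s \in \mathbf{Q}(f+sd)$. So the only real content is to identify this limit $u^s$ as $\M(f+sd)$, and for that the key is to trap $u^s$ inside the interval $[\M(f+sd), \overline{u}]$ (equivalently $[\underline{u}, \M(f)]$, using the remark that only the upper endpoint matters for the maximal solution map), so that maximality forces $u^s = \M(f+sd)$.

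The second step is to check the monotonicity of $\{u_n^s\}$: since $d \leq 0$ and $\M(f)$ is a supersolution for $S(f+sd,\cdot)$ (as noted just before the statement), we have $u_1^s = S(f+sd,\M(f)) \leq \M(f) = u_0^s$, and then the comparison principle \cite[Theorem 5.1, \S 4:5]{Rodrigues} propagates this to give $u_{n+1}^s \leq u_n^s$ for all $n$, i.e. $\{u_n^s\}$ is decreasing. Hence $u^s \leq \M(f)$, which is the upper bound we want. For the lower bound, I would use Lemma \ref{lem:maxMapComparison}: since $d \in -V_+^*$, it gives $\M(f+sd) \leq \M(f)$, and monotonicity of $S(f+sd,\cdot)$ in its second argument together with $\M(f) \geq \M(f+sd)$ yields $u_1^s = S(f+sd,\M(f)) \geq S(f+sd,\M(f+sd)) = \M(f+sd)$, so every $u_n^s \geq \M(f+sd)$ and therefore $u^s \geq \M(f+sd)$. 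Combining, $u^s \in [\M(f+sd), \M(f)] \subset [\underline{u},\overline{u}]$, and since $u^s \in \mathbf{Q}(f+sd)$, the definition of maximality on $[\underline{u},\overline{u}]$ forces $u^s = \M(f+sd)$.

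I would conclude by noting the convergence $u_n^s \to \M(f+sd)$ is strong in $V$ (from \cite[Proposition 3.7]{AHROC}) and decreasing, giving $u_n^s \searrow \M(f+sd)$ as claimed, and remark (as in \S \ref{sec:min}) that if $\Phi$ is completely continuous one can instead appeal to Lemma \ref{lem:uNConvergenceMax} applied on the interval $[\underline{u},\M(f)]$ and dispense with \eqref{ass:PhiLipBoundM}. The main obstacle — though it is a mild one — is being careful that the various comparison and monotonicity arguments are applied on the correct interval: the sequence $\{u_n^s\}$ is built from the supersolution $\M(f)$, and one must verify it stays above $\underline{u}$ (which uses \eqref{eq:subsolutionForPerturbedProblem} to know $\underline{u}$ is a subsolution for the perturbed problem and hence $S(f+sd,\cdot)$ maps $[\underline{u},\cdot]$ into itself), so that $\M(f+sd)$ is genuinely well defined on $[\underline{u},\overline{u}]$ and the trapping argument is legitimate; everything else is a routine transcription of the minimal-map case.
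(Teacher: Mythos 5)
Your proposal is correct and follows essentially the same route as the paper: strong convergence $u_n^s \to u^s \in \mathbf{Q}(f+sd)$ from \eqref{ass:PhiLipBoundM} via \cite[Proposition 3.7]{AHROC}, the lower bound $u_n^s \geq \M(f+sd)$ from Lemma \ref{lem:maxMapComparison}, the upper bound from $\M(f)$ being a supersolution for $S(f+sd,\cdot)$, and identification by maximality. The only cosmetic difference is that you sandwich $u^s$ in $[\M(f+sd),\M(f)]$ and invoke maximality directly, whereas the paper first identifies $u^s = \M_{[\underline{u},\M(f)]}(f+sd)$ via Lemma \ref{lem:uNConvergenceMax} and then uses Lemma \ref{lem:maxMapComparison} to equate this with $\M(f+sd)$; the two arguments are equivalent.
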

\begin{proof}
Firstly, as indicated in the proof of Proposition \ref{prop:usnConvMin}, \eqref{ass:PhiLipBoundM} allows us to obtain that $u_n^s \to u^s$ strongly in $V$ to some $u^s \in \mathbf{Q}(f+sd)$. We see that, using Lemma \ref{lem:maxMapComparison}, $u_1^s \geq S(f+sd, \M(f+sd)) = \M(f+sd)$, implying $u_n^s \geq \M(f+sd)$. Since $\M(f)$ is a supersolution for $S(f+sd, \cdot)$, we obtain, by Lemma \ref{lem:uNConvergenceMax}, (the following convergence  being weak) $u_n^s \searrow u^s = \M_{[\underline{u}, \M(f)]}(f+sd)$. 
Lemma \ref{lem:maxMapComparison} tells us that in fact $\M(f+sd)$ belongs to $[\underline{u}, \M(f)]$ so we must have $u^s = \M(f+sd)$ because $\M(f+sd)$ is also the largest element on $[\underline{u}, \M(f)]$.
\end{proof}
Finally, with the same reasoning as in the proof of Theorem \ref{thm:min}, due to Theorem \ref{thm:dirDiff1} and Proposition \ref{prop:usnConvMax} we can obtain the next result.
\begin{theorem}[Directional differentiability of $\M$]
For $f \in V^*_+$, let $\overline{u}$ be a supersolution for $S(f,\cdot)$ and given $d \in V^*_+$, let $\underline{u}$ satisfy \eqref{eq:subsolutionForPerturbedProblem} and $\underline{u} \leq \overline{u}$.
In addition to  
\eqref{ass:feasibleForMin} and \eqref{ass:PhiLipBoundM}, assume that
\begin{equation}
\text{$\Phi\colon V \to V$ is Hadamard directionally differentiable at $\M(f)$}\label{ass:PhiHadamardAtM}.
\end{equation}
Then the map $\M\colon V^* \to V$ is directionally differentiable at $f$ in the direction $d$:
\begin{equation}
\lim_{s \to 0^+}\frac{\M(f+sd)-\M(f)}{s}= \M'(f)(d).
\end{equation}
Furthermore, $\M'(f)(d)$ satisfies the QVI  
\begin{equation*}
\begin{aligned}
\alpha \in \mathcal{K}_\M(\alpha) &: \langle A\alpha - d, \alpha - v \rangle \leq 0 \quad \forall v \in \mathcal{K}_\M(\alpha),\\
\mathcal{K}_\M(\alpha) &:= \Phi'(\M(f))(\alpha)  + \mathcal{T}_{\mathbf K(\M(f))}(\M(f)) \cap [f-A\M(f)]^\perp.  
\end{aligned}
\end{equation*}	
\end{theorem}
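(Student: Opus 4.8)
The plan is to mirror exactly the argument used for Theorem~\ref{thm:min}, transporting every ingredient via the order-reversing trick already set up in this section. First I would verify that the hypotheses of Theorem~\ref{thm:dirDiff1} are met at the base point $u = \M(f)$: assumption \eqref{ass:PhiLipBoundM} is precisely the local Lipschitz condition \eqref{ass:PhiLipBound} evaluated at $\M(f)$, and \eqref{ass:PhiHadamardAtM} is \eqref{ass:PhiHadamardAtY} at $\M(f)$. Since $\M(f)\in\mathbf{Q}(f)$, Theorem~\ref{thm:dirDiff1} applies with the choice $u=\M(f)$ and furnishes an element $u^s\in\mathbf{Q}(f+sd)\cap B_R(\M(f))$ together with $\alpha\in V$ satisfying the QVI \eqref{eq:QVIforAlphaGeneral} (with $u=\M(f)$) such that $u^s = \M(f) + s\alpha + o(s)$ in $V$. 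Note that the theorem requires $d\in V^*$ with no sign restriction, so feeding it a non-positive direction is legitimate; here, as in the statement, $d\in V^*_+$ enters through the sign flip implicit in the section's setup, so that $-d\in -V^*_+$ is the direction actually used to drive the monotone iteration (this is the point where one must be slightly careful about which sign convention is in force — the statement writes $d\in V^*_+$ but the perturbation is along $-d$, matching \eqref{eq:subsolutionForPerturbedProblem} and Lemma~\ref{lem:maxMapComparison}).

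Next I would identify the selected solution $u^s$ with $\M(f+sd)$. This is where Proposition~\ref{prop:usnConvMax} does the work: the $u^s$ produced by Theorem~\ref{thm:dirDiff1} is, by construction in \cite[Proposition 3.7]{AHROC}, the limit of the Banach–Picard iteration $u^s_n = S(f+sd,u^s_{n-1})$ with $u^s_0 = \M(f)$, and Proposition~\ref{prop:usnConvMax} establishes precisely that this sequence satisfies $u^s_n \searrow \M(f+sd)$ in $V$ under \eqref{ass:feasibleForMin} and \eqref{ass:PhiLipBoundM}. Therefore $u^s = \M(f+sd)$, and the expansion becomes
\begin{equation*}
\M(f+sd) = \M(f) + s\alpha + o(s),
\end{equation*}
which is exactly the assertion that $\M$ is directionally differentiable at $f$ in the direction $d$ with $\M'(f)(d) = \alpha$. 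The characterising QVI is read off directly from \eqref{eq:QVIforAlphaGeneral} by substituting $u=\M(f)$, giving the critical cone $\mathcal{K}_\M(\alpha) := \Phi'(\M(f))(\alpha) + \mathcal{T}_{\mathbf{K}(\M(f))}(\M(f)) \cap [f-A\M(f)]^\perp$ as stated.

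The main obstacle — and the only nontrivial content beyond bookkeeping — is already absorbed into Proposition~\ref{prop:usnConvMax}, namely the identification of the monotone limit of the perturbed iteration with the \emph{global} maximal solution on $[\underline{u},\overline{u}]$ rather than merely with $\M_{[\underline{u},\M(f)]}(f+sd)$. That identification relies on the comparison Lemma~\ref{lem:maxMapComparison}, which in turn uses the unilateral (only $\leq$) structure of the constraint set and assumption \eqref{eq:subsolutionForPerturbedProblem} to keep the iterates above $\underline{u}$, together with \eqref{ass:PhiPresOrder} to pass to the limit in the perturbed VIs. Since all of these are available, the remaining proof is a two-line invocation: apply Theorem~\ref{thm:dirDiff1} at $\M(f)$, then invoke Proposition~\ref{prop:usnConvMax} to pin down the selection, exactly as in the proof of Theorem~\ref{thm:min}. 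As in the minimal case, one may additionally remark that the solutions $\alpha_n$ of the VIs $\alpha_n\in\mathcal{K}_\M(\alpha_{n-1})$ with data $d$ converge monotonically (decreasingly, by the decreasing monotonicity of $\{u^s_n\}$) to $\alpha$ in $V$, and that \eqref{ass:PhiLipBoundM} may be dropped if $\Phi\colon V\to V$ is completely continuous.
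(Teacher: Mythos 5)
Your proposal is correct and takes essentially the same approach as the paper, which likewise proves this theorem by applying Theorem \ref{thm:dirDiff1} at the base point $\M(f)$ and then identifying the selected solution $u^s$ with $\M(f+sd)$ via Proposition \ref{prop:usnConvMax}, exactly mirroring the proof of Theorem \ref{thm:min}. Your side remark on the sign of $d$ correctly flags a real discrepancy between the theorem statement ($d\in V^*_+$ with perturbation $f+sd$) and the hypotheses of Lemma \ref{lem:maxMapComparison} and Proposition \ref{prop:usnConvMax} (which require $d\in -V^*_+$), a point the paper itself does not address.
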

In a similar way to \S \ref{sec:min}, we find that $\alpha_n \searrow \alpha$ in $V$ where
\begin{align*}
\nonumber \alpha_n \in \mathcal{K}_\M(\alpha_{n-1}) &: \langle A\alpha_n - d, \alpha_n - \varphi \rangle \leq 0 \quad \forall \varphi \in \mathcal{K}_\M(\alpha_{n-1}).
\end{align*}

\section{Examples}
In this section, we illustrate the applicability of the theory developed in this paper with some examples  and applications of QVI problems to which we can apply the above results. We start in \S \ref{sec:eg1} with a simple and somewhat academic 1D example, the purpose of which is to simply demonstrate that there are indeed non-trivial QVI problems satisfying the requirements of the above theorems. In \S \ref{sec:eg2}, we study an example where $\Phi$ is related to a solution mapping of a PDE. We conclude in \S \ref{sec:thermo} with a real world application involving a PDE coupled with a VI; this also serves to demonstrate that maps $\Phi$ like the simpler one in \S \ref{sec:eg2} have uses in physical models and applications.

Before we proceed, let us give some concrete examples of function spaces $V$ and $H$ and operators $A$ such that all of the assumptions required (stated in the introduction) are satisfied. We will also point out the examples that lead to regular Dirichlet spaces so that the improved characterisation of the critical cone (see Remark \ref{rem:dirichletSpace}) is available. 
\begin{egs}[Examples of the functional setting]\label{eg:examples}
Let $\Omega \subset \mathbb{R}^n$ be a bounded Lipschitz domain and $n \geq 1$. In the following examples, we will consider $H$ to be an $L^2$-type space and the ordering $\leq$ will be taken to be the usual one: $u \leq v$ if and only if $u \leq v$ a.e. on the set under consideration. The cone $H_+$ will be the set of a.e. non-negative functions in $H$. 
\begin{enumerate}[label=(\roman*)]
\item  Set $V=H^1_0(\Omega)$ and $H=L^2(\Omega)$ with the standard norms. Take $A$ to be a second-order linear elliptic operator of the form
\begin{equation}\label{eq:ellipticOperator}
\langle Au, v \rangle =  \sum_{i,j=1}^n \int_\Omega a_{ij}\frac{\partial u}{\partial x_i}\frac{\partial v}{\partial x_j} + \sum_{i=1}^n \int_\Omega b_{i}\frac{\partial u}{\partial x_i}v + \int_\Omega c_0 uv\qquad \forall u, v \in V,
\end{equation}
 with $a_{ij}=a_{ji} \in C^{0,1}(\bar\Omega)$, $b_i \in W^{1,\infty}(\Omega)$, $c_0 \in L^\infty_+(\Omega)$ and 
\begin{equation*}
\sum_{i,j=1}^n a_{ij}\xi_i\xi_j \geq C|\xi|^2 \quad\text{a.e.}
\end{equation*}
for some $C>0$ and such that $A$ is coercive.  The prototypical case is the selection $a_{ij} \equiv \delta_{ij}$ and $b_i \equiv c_0 \equiv 0$ which leads to $A=-\Delta$ being the Dirichlet Laplacian.
\begin{itemize}
\item $V$ is a regular Dirichlet space \cite[\S 1.2, Examples 1.2.3]{FukushimaBook} with $\xi(u,v) = \int_\Omega \grad u \cdot \grad v$.
\end{itemize}
\item Take $H$ as above but now $V=H^1(\Omega)$. Let $A$ be as in \eqref{eq:ellipticOperator} with additionally $c_0 \geq \lambda > 0$ a.e. where $\lambda$ is a constant such that $A$ is coercive. If $a_{ij} \equiv \delta_{ij}$, $b_i \equiv 0$ and $c_0 >0$ is a constant, we have $A = -\Delta + c_0\Id$ with $-\Delta$ the Neumann Laplacian.  
\begin{itemize}
\item Setting instead $H=L^2(\overline{\Omega})$ implies that the density condition in Remark \ref{rem:dirichletSpace} holds and gives rise to a regular Dirichlet form, see \cite[\S 1.6, Example 1.6.1]{FukushimaBook}. In the context of Remark \ref{rem:dirichletSpace}, we made the choice $X=\overline{\Omega}$.
\end{itemize}
\item Let $V=H^s(\Omega)$ for $s \in (0,1)$ where $H^s(\Omega)$ is the fractional Sobolev space consisting of functions in $L^2(\Omega)$ such that the quantity
\[\norm{u}{H^s(\Omega)}^2 := \int_{\Omega} u^2(x)\;\mathrm{d}x + \int_{\Omega}\int_{\Omega}\frac{|u(x)-u(y)|^2}{|x-y|^{n+2s}}\;\mathrm{d}x\;\mathrm{d}y\]
is finite. We may take $H=L^2(\overline{\Omega})$ so that $X=\overline{\Omega}$ again. This is a regular Dirichlet space (see e.g. \cite[\S 6.5, Example 6]{MR2849840}) with the form
\[\xi(u,v) = \int_{\Omega}\int_{\Omega}\frac{(u(x)-u(y))(v(x)-v(y))}{|x-y|^{n+2s}}\;\mathrm{d}x\;\mathrm{d}y.\]
An example for $A$ is $\langle Au, v \rangle = (u,v)_{H^s(\Omega)}$. 
\end{enumerate}
\end{egs}
These are examples of possible choices of function spaces and operators $A$. We now come to examples of obstacle mappings $\Phi$ and the related QVI problems.

\subsection{One-dimensional toy problem}\label{sec:eg1}
Construct an increasing, smooth function $\Phi\colon \mathbb{R} \to \mathbb{R}$ with the following properties: (a) $\Phi(0) \geq 0$, (b) for a given sequence $\{y_j\}_{j=1}^N \subset \mathbb{R}_+$ (for some $N \in \mathbb{N}$, $N >1$) such that $y_1 \leq y_2 \leq \hdots \leq y_N$, we have the existence of some $\epsilon > 0$ such that for all $j \in \{1, \hdots, N\}$, we have $\Phi|_{B_\epsilon(y_j)} \equiv y_j$.

Set $f:=\max(0, Ay_1, \hdots, Ay_N)$. Then $Ay_j -f \leq 0$ and if $v$ is such that $v \leq \Phi(y_j)$ then $y_j-v = \Phi(y_j)-v \geq 0$, hence each $y_j \in \mathbf{Q}(f)$ is a solution.

Taking $d \geq 0$, it is evident that $y_1$ is a subsolution for $S(f,\cdot)$ and $S(f+sd,\cdot)$, and $A^{-1}(f+d)$ is a supersolution (see Example \ref{eg:ofSupersolution}) with $\m(f) =y_1$ the minimal on $[y_1, A^{-1}(f+d)]$. Clearly, all assumptions of Theorem \ref{thm:min} are satisfied and we get the directional differentiability of $\m$ in all non-negative directions.
\subsection{Obstacle maps related to inverse of elliptic operators}\label{sec:eg2} Let $L\colon V \to V^*$ be a bounded, linear\footnote{This can be readily generalised to nonlinear, Fr\'echet differentiable operators $L$ with appropriate changes to the calculations that follow.}, coercive (with coercivity constant $K_a$) and T-monotone operator and take a Hadamard differentiable map $g\colon H \to V^*$ such that
\begin{enumerate}[label=(\roman*)]
\item $g(0) = 0$,
\item $g\colon B_\epsilon(0) \to V^*$ is Lipschitz for some $\epsilon>0$ with Lipschitz constant $C_g$ satisfying $C_g \leq K_aC_\Phi$ where $C_\Phi$ is as in \eqref{ass:PhiLipBoundm},
\item $u \leq v$ in $H$ implies that $g(u) \leq g(v)$ (i.e., $g$ is increasing).
 \end{enumerate} Define $\Phi\colon H \to V$ by $\phi =\Phi(u)$ if and only if
\[L\phi = g(u).\]
That is, $\Phi(u)$ is the solution of a PDE. It is standard that this is well defined: for every $u \in H$, there exists a unique $\phi \in V$ satisfying this equation by the Lax--Milgram lemma.

If $u \leq v$, denoting $\phi_u = \Phi(u)$ and $\phi_v = \Phi(v)$, we have
\[\langle L(\phi_u)-L(\phi_v), (\phi_u-\phi_v)^+ \rangle = \langle g(u)-g(v), (\phi_u-\phi_v)^+\rangle \leq 0\]
since $g$ is increasing. By T-monotonicity of $L$, $\phi_v \leq \phi_u$, showing that $\Phi$ is an increasing map. It is also not difficult to see that $\Phi(0)=0$.

Concerning differentiability, take $u, h \in V$. Denoting $\varphi := \Phi(u+sh) = L^{-1}g(u+sh)$ and $\phi:=\Phi(u)=L^{-1}g(u)$, we see that, using the Hadamard differentiability of $g$,
\[\varphi = L^{-1}(g(u) + sg'(u)(h) + o(s)) = \phi + sL^{-1}g'(u)(h) + L^{-1}o(s),\]
hence $\Phi$ is Hadamard differentiable with $\Phi'(u)(h) = L^{-1}g'(u)(h)$ given as the unique solution of the equation 
\[L\delta = g'(u)(h).\]
Now, take $f \in V^*_+$. If $v \in V$ is such that $v \leq \Phi(0)=0$ then $\langle A0-f, 0-v \rangle = \langle f,v \rangle  \leq 0$, i.e., $0 \in \mathbf{Q}(f)$.

Let $d \in V^*_+$. From Example \ref{eg:ofSupersolution},  $A^{-1}(f+d)$ is a supersolution with $A^{-1}(f+d) \geq 0$. It follows that $\mathbf{Q}(f)\cap[0, A^{-1}(f+d)]$ is non-empty and there exist minimal and maximal solutions on this interval. Let us point out that if in addition
\[A^{-1}f \leq L^{-1}g(A^{-1}f),\]
it is easy to see that $A^{-1}f \in \mathbf{Q}(f)$ is a solution (and not just a supersolution of $S(f,\cdot)$). This condition is satisfied if for example, $L \equiv A$ and $g(A^{-1}f) \geq f$, as can be seen via the weak comparison principle.

Now, take $u, v \in B_\epsilon(0)$ for some $\epsilon$. Then $\Phi(u)-\Phi(v) = L^{-1}g(u) - L^{-1}g(v)$ and hence
\begin{align*}
\norm{\Phi(u)-\Phi(v)}{V} &\leq K_a^{-1}\norm{g(u)-g(v)}{V^*}\\
&\leq K_a^{-1}C_g\norm{u-v}{V}
\end{align*}
which, by assumption (ii) on $g$ above, shows that \eqref{ass:PhiLipBoundm} is satisfied. Therefore, we again have the directional differentiability of $\m$ for all $d \in V^*_+$.

\subsection{Application to thermoforming}\label{sec:thermo}
We study now an application in thermoforming that we set out in \cite{AHR}; the background and introductory details below are slightly revised from there but afterwards we will also improve and expand previous results and show that the minimal solution map associated to this example is differentiable. 

Thermoforming is a process used to manufacture products by heating a membrane or plastic sheet to its pliable temperature and then forcing the membrane (by means of vacuum or gas pressure) onto a mould which is typically made of aluminium or an alloy; this makes the membrane deform and take on the shape of the mould.  The process is applied to form structures on a variety of physical scales from car panels to microfluidic structures (e.g. channels on the range of micrometers). The importance of the applications and the necessity of precision of the details in this process has sparked research into its modelling and accurate numerical simulation, e.g., \cite{Munro2001, Warby2003}. 

Mathematically, the contact problem associated with the membrane and the mould can be described as a variational inequality problem (assuming perfect sliding of the membrane with the mould as described in \cite{Whiteman2000}). However, when the heated sheet is forced into contact with the mould, a complex phenomenon takes place: in principle, the mould and the plastic sheet are at different temperatures (the mould may be cold relative to the membrane), which triggers a heat transfer process with difficult-to-predict consequences (see for example \cite{Lee2001}, e.g., it changes the polymer viscosity). In practice, the thickness of the thermoformed piece can be controlled locally by the mould structure and its initial temperature distribution \cite{Lee2001} and the non-uniform temperature distribution of the polymer sheet has substantial influence on the results \cite{Nam2000}. Indeed, a commonly-used material for the mould is aluminum and large heat fluctuations create a substantial difference in the size; aluminum has a relatively high thermal expansion volumetric coefficient and this implies that there is a dynamic change in the obstacle (the mould) as the polymer sheet is forced in contact with it. This suggests that the model can be formulated as a compliant obstacle-type problem (such as the one described in \cite{Outrata1998}) and hence the overall process is a QVI with underlying nonlinear PDEs determining the heat transfer and the volume change in the obstacle.

We consider this compliant obstacle behavior whilst making simplifying assumptions in order to study a tractable yet meaningful model.

\subsubsection{The mathematical model}
We restrict the analysis to the 1D case for the sake of simplicity. Let $\Phi_0\colon [0,1] \to \mathbb{R}^+$ be the (parametrised) mould shape that we wish to reproduce through a sheet or membrane. The membrane lies below the mould and is pushed upwards through some mechanism (as mentioned, usually through vacuum and/or air pressure) denoted as $f$.  We make the following fundamental physical assumptions to simplify the oncoming study:
\begin{enumerate}
\item[(i)] the temperature for the membrane $u$ is constant and prescribed,
\item[(ii)] the growth of the mould $\Phi(u)$ is affine with respect to its temperature $T$, 
\item[(iii)] the temperature (denoted by $T$, as above) of the mould (a) is subject to diffusion, convection and boundary conditions arising from the insulated boundary and, (b) depends on the vertical distance between the mould $\Phi(u)$ and the membrane $u$.
\end{enumerate}
Despite the fact that the thermoforming process is an evolutionary process, the setting described by the above assumptions is appropriate for one time step in the time discretisation and fits the mathematical framework of the present paper.  

Define $V=H^1(0,1)$, $H=L^2(0,1)$ and let $A=-\Delta + I$ 
where $-\Delta$ is the Neumann Laplacian. The system we consider is the following:
\begin{subequations}\label{eq:thermoformingSystem}
\begin{align}
u \in V : u \leq \Phi(u), \quad \langle Au-f, u-v \rangle &\leq 0\quad \forall v \in V : v \leq \Phi(u),\label{eq:QVIforu}\\
kT-\Delta T &= g(\Phi(u)-u) &&\text{on $[0,1]$},\label{eq:pdeForT}\\
\partial_\nu T &= 0 &&\text{on $\{0, 1\}$},\label{eq:bcForT}\\
\Phi(u) &= \Phi_0 + LT &&\text{on $[0,1]$},\label{eq:mould}
\end{align}
\end{subequations}
where 
$f \in H_+$, $k>0$ is a constant, $\Phi_0 \in V$, $L\colon V \to V$ 
is a bounded linear operator such that, with $\Omega:=[0,1]$,
\[
\text{for every $\Omega_0 \subset \Omega$, if $u \leq v$ a.e. on $\Omega_0$ then $Lu \leq Lv$ a.e. on $\Omega_0$,}\]
and  $g\colon \mathbb{R} \to \mathbb{R}$ is decreasing and $C^2$ with $g(0)=M > 0$ a constant, $0 \leq g \leq M$ with $g'$ bounded\footnote{Under these circumstances, $g$ maps $V$ into $V$ \cite[Theorem 1.18]{MR1207810}} with 
\begin{equation}\label{eq:assOngForThermo}
g' \equiv 0 \text{ on } (1,\infty),
\end{equation}
and $g''$ bounded from above. Thus when the membrane and mould are in contact or are close to each other, there is a maximum level of heat transfer onto the mould, whilst when they are sufficiently separated, there is very little or no heat exchange. An example of $g$ to have in mind is a smoothing of the function
\begin{equation}\label{eq:exampleg}
\tilde g(r) = \begin{cases}
1 &: \text{if $r \leq 0$}\\
1-r &: \text{if $0 < r < 1$}\\
0 &: \text{if $r \geq 1$}.
\end{cases}
\end{equation}
\begin{remark}
The system \eqref{eq:thermoformingSystem} is derived as follows. Consideration of the potential energy of the membrane will show that $u$ solves the inequality \eqref{eq:QVIforu} \cite{Whiteman2000} with the novel QVI nature resulting from assuming that heat transfer occurs between the membrane and mould (the membrane and the mould modify each other). If we let $\hat T\colon \Gamma \to \mathbb{R}$ be the temperature of the mould defined on the curve
\[\Gamma :=\{(r,\Phi(u)(r)) : r \in [0,1]\} \subset \mathbb{R}^2,\]
our modelling assumptions directly imply that $\hat T$ solves the PDE
\begin{equation*} 
\begin{aligned}
k\hat T(x)-\Delta_\Gamma \hat T(x) &= g(x_2-u(x_1)) &&\text{for $x=(r,\Phi(u)(r)) \in \Gamma$},\\
\frac{\partial \hat T}{\partial \nu} &= 0 &&\text{on $\partial\Gamma$, 
}
\end{aligned}
\end{equation*}
where $x=(x_1,x_2)$ and $-\Delta_\Gamma$ is the Laplace--Beltrami operator. Reparametrising by $T(r) = \hat T(r, \Phi(r))$ and simplifying the Laplace--Beltrami operator leads to \eqref{eq:pdeForT}, see the appendix of \cite{AHR} for the details.
\end{remark}

\subsubsection{Properties for the system}

It can be useful to write the equation for $T$ explicitly in terms of $u$ by plugging  \eqref{eq:mould} into \eqref{eq:pdeForT}:
\begin{equation}\label{eq:equationForT}
\begin{aligned}
kT-\Delta T &= g(LT+\Phi_0 -u) &&\text{on $[0,1]$},\\
\partial_\nu T &= 0 &&\text{on $\{0, 1\}$ }.
\end{aligned}
\end{equation}
According to \cite[Lemma 16]{AHR}, for every $u \in H$, there exists a unique solution $T \in V$ to this equation. This formulation allows us to deduce the following properties for $\Phi$.
\begin{lem}[{\cite[Lemmas 17 and 18]{AHR}}]
The map $\Phi\colon H \to V \subset H$ is increasing and $\Phi(0) \geq 0$ a.e.
\end{lem}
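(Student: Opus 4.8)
The plan is to reduce everything to the scalar equation \eqref{eq:equationForT}. Write $T_w \in V$ for the unique solution (which exists by \cite[Lemma 16]{AHR}) of
\begin{equation*}
kT_w - \Delta T_w = g(LT_w + \Phi_0 - w) \text{ on } [0,1], \qquad \partial_\nu T_w = 0 \text{ on } \{0,1\},
\end{equation*}
so that $\Phi(w) = \Phi_0 + LT_w$ by \eqref{eq:mould}. Since $L$ is order preserving (take the subset equal to all of $\Omega$) and $\Phi_0 \geq 0$ a.e. by assumption on the mould shape, both assertions follow once we know (i) $u \leq v$ implies $T_u \leq T_v$ a.e., and (ii) $T_0 \geq 0$ a.e.; indeed $\Phi(u) = \Phi_0 + LT_u \leq \Phi_0 + LT_v = \Phi(v)$, and $\Phi(0) = \Phi_0 + LT_0 \geq 0 + L0 = 0$ a.e.

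For (i) I would run a comparison argument. Subtract the equations for $T_u$ and $T_v$ and test the difference $w := T_u - T_v \in V$ against $w^+ \in V$. On the left-hand side, since $k > 0$ the Neumann operator gives $\langle (kI - \Delta)w, w^+ \rangle = k\int (w^+)^2 + \int |\grad w^+|^2 \geq \min(k,1)\norm{w^+}{V}^2$, using $w^+ w^- = 0$ and $\grad w^+ \cdot \grad w^- = 0$ a.e. The point is to show the right-hand side $\langle g(LT_u + \Phi_0 - u) - g(LT_v + \Phi_0 - v), w^+ \rangle$ is $\leq 0$. I would insert the intermediate term $g(LT_v + \Phi_0 - u)$ and split the integrand into $[g(LT_u + \Phi_0 - u) - g(LT_v + \Phi_0 - u)] + [g(LT_v + \Phi_0 - u) - g(LT_v + \Phi_0 - v)]$. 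Since $w^+$ vanishes outside the set $\Omega_+ := \{T_u > T_v\}$, it suffices that both brackets be $\leq 0$ a.e. on $\Omega_+$. The second bracket is $\leq 0$ everywhere because $u \leq v$ and $g$ is decreasing. For the first bracket we use that on $\Omega_+$ we have $T_v \leq T_u$ a.e., so the subset-restricted monotonicity of $L$ (applied with $\Omega_0 = \Omega_+$) yields $LT_v \leq LT_u$ a.e. on $\Omega_+$, and then $g$ decreasing makes the first bracket $\leq 0$ a.e. there. Hence $\min(k,1)\norm{w^+}{V}^2 \leq 0$, so $w^+ = 0$ and $T_u \leq T_v$.

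For (ii), with $u = 0$ the source term $g(LT_0 + \Phi_0)$ is $\geq 0$ a.e. because $0 \leq g \leq M$; testing the equation for $T_0$ against $-T_0^- \in V$ and arguing exactly as above with $kI - \Delta$ gives $\min(k,1)\norm{T_0^-}{V}^2 \leq \langle g(LT_0 + \Phi_0), -T_0^- \rangle \leq 0$, hence $T_0^- = 0$ and $T_0 \geq 0$ a.e. Combining with (i) and the order-preservation of $L$ completes the proof.

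The single genuinely delicate point is the non-positivity of the right-hand term in (i): this is precisely where one must exploit the local (subset) order-preservation of $L$ together with the monotonicity of $g$ to break the apparent circularity arising from the fact that $T_u$ and $T_v$ themselves enter the nonlinearity $g$ through $L$. Everything else is a routine weak maximum principle computation, and no strong convergence or regularity beyond $V$ is needed, since the boundedness of $g$ places every source term in $H \hookrightarrow V^*$ so that $w^+$ and $-T_0^-$ are admissible test functions.
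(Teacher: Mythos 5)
Your proof is correct, and it follows essentially the same route as the cited source: the paper itself only quotes this result from \cite[Lemmas 17 and 18]{AHR}, where the argument is exactly this kind of comparison — test the difference of the two temperature equations with $(T_u-T_v)^+$, exploit that $g$ is decreasing together with the subset-restricted order preservation of $L$ on the set $\{T_u>T_v\}$ to kill the right-hand side, and run the analogous sign argument with $-T_0^-$ for $\Phi(0)\geq 0$. You have correctly identified the one delicate step (why the nonlinear feedback through $L$ and $g$ does not break the maximum principle), so nothing further is needed.
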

As before, we choose $\underline{u}=0$ as subsolution and $\overline{u}=A^{-1}(f+d)$ as supersolution. Using these results along with the Birkhoff--Tartar theorem, we have the following existence result.
\begin{theorem}[{\cite[Theorem 7]{AHR}}]\label{thm:existenceForThermoformingSystem}
There exists a solution $(u, T, \Phi(u))$ to the system \eqref{eq:thermoformingSystem}.
\end{theorem}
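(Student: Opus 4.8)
The plan is to decouple the system \eqref{eq:thermoformingSystem} into the single QVI \eqref{eq:QVIforu} for a concrete obstacle map and then apply the Birkhoff--Tartar machinery recalled in \S\ref{sec:existenceOfExtremals}. First I would use \eqref{eq:mould} to eliminate $\Phi(u)$ from \eqref{eq:pdeForT}--\eqref{eq:bcForT}, obtaining the reduced equation \eqref{eq:equationForT}. By \cite[Lemma 16]{AHR}, for each $u \in H$ this equation has a unique solution $T = T_u \in V$, and setting $\Phi(u) := \Phi_0 + LT_u$ defines an obstacle map $\Phi\colon H \to V \subset H$ which, by \cite[Lemmas 17 and 18]{AHR}, is increasing with $\Phi(0) \geq 0$ a.e. Under this identification, a solution of \eqref{eq:thermoformingSystem} is precisely a solution $u$ of the QVI \eqref{eq:QVIforu} for this $\Phi$, together with $T := T_u$ and $\Phi(u) := \Phi_0 + LT_u$; hence it suffices to produce such a $u$.

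Next I would verify that this data fits the abstract framework of \S\ref{sec:intro} with $V = H^1(0,1)$, $H = L^2(0,1)$ (a.e.\ ordering, as in Examples \ref{eg:examples}(ii)) and $A = -\Delta + \Id$ with $-\Delta$ the Neumann Laplacian, which is bounded, coercive and T-monotone; note also that $v \in V$ implies $v^+ \in V$ with $\norm{v^+}{V} \leq \norm{v}{V}$. Since $\Phi(y) \in V$ one always has $\Phi(y) \in \mathbf{K}(y)$, so $\mathbf{K}(y) \neq \emptyset$ and $S(f,\cdot)$ is defined on all of $H$. I would then exhibit an ordered sub/supersolution pair: because $f \in H_+ \subset V^*_+$ and $\Phi(0) \geq 0$, one has $0 = S(0,0) \leq S(f,0)$, so $\underline{u} := 0$ is a subsolution; by Remark \ref{rem:subAndSupSolns}, $\overline{u} := A^{-1}f$ (the solution of $Az = f$) is a supersolution; and $\overline{u} \geq 0 = \underline{u}$ by the weak comparison principle applied to $f \geq 0$.

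With a subsolution lying below a supersolution, the Birkhoff--Tartar theory of fixed points in vector lattices, exactly as invoked in \S\ref{sec:existenceOfExtremals} (and with \eqref{ass:PhiPresOrder} \emph{not} required), yields $\mathbf{Q}(f) \cap [0, A^{-1}f] \neq \emptyset$. Choosing any $u$ in this set and setting $T := T_u$ (the unique solution of \eqref{eq:equationForT}) and $\Phi(u) := \Phi_0 + LT_u$ recovers \eqref{eq:pdeForT}--\eqref{eq:mould}, so the triple $(u, T, \Phi(u))$ solves \eqref{eq:thermoformingSystem}, which finishes the argument.

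The substantive content lies in the two cited lemmas of \cite{AHR}, which I regard as the main obstacle. Establishing unique solvability of \eqref{eq:equationForT} for each fixed $u$ (\cite[Lemma 16]{AHR}) requires a fixed-point argument that is delicate precisely because $L$ is composed inside the nonlinearity $g$ on the right-hand side; one leans on the bounds $0 \leq g \leq M$, the boundedness of $g'$, and the flatness property \eqref{eq:assOngForThermo}. Establishing that $\Phi$ is increasing with $\Phi(0) \geq 0$ (\cite[Lemmas 17 and 18]{AHR}) combines the T-monotonicity of $-\Delta + k\Id$, the monotonicity of $g$ (decreasing) and the order-preserving property of $L$, with care that the composition $LT_u$ does not destroy monotonicity. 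Granting those lemmas, the existence statement is obtained simply by assembling the Birkhoff--Tartar result with the choices $\underline{u} = 0$ and $\overline{u} = A^{-1}f$.
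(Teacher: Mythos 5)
Your proposal is correct and follows essentially the same route as the paper: reduce the system to the QVI \eqref{eq:QVIforu} via the obstacle map $u \mapsto \Phi_0 + LT_u$ defined through \eqref{eq:equationForT}, invoke \cite[Lemmas 16--18]{AHR} for well-posedness and the increasing property with $\Phi(0)\geq 0$, and apply the Birkhoff--Tartar fixed-point theorem on the interval between $\underline{u}=0$ and $\overline{u}=A^{-1}f$. The only (immaterial) difference is that the paper takes $\overline{u}=A^{-1}(f+d)$, anticipating the later perturbation in the direction $d$, whereas $A^{-1}f$ suffices for bare existence.
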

Testing \eqref{eq:equationForT} with the solution $T$ and using the fact that $g \in L^\infty(\mathbb R)$, we have the \emph{a priori} bound (independent of $\Phi_0$, $L$ and $u$)
\[\norm{T}{V} \leq \norm{g}{\infty}\min(1,k)^{-1} =: C^*.\]
This next result improves \cite[Lemma 19]{AHR}, which needed a smallness condition related to various constants appearing in the model for the conclusion to hold.
\begin{lem}
The map $\Phi\colon V \to V$ is completely continuous.
\end{lem}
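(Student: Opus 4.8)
The plan is to show that $\Phi\colon V \to V$ maps weakly convergent sequences to strongly convergent ones, i.e., if $u_n \weaklyto u$ in $V$ then $\Phi(u_n) \to \Phi(u)$ in $V$. Since $V = H^1(0,1) \ctsCompact L^2(0,1) = H$ compactly (Rellich--Kondrachov, using that $[0,1]$ is a bounded Lipschitz domain), $u_n \weaklyto u$ in $V$ implies $u_n \to u$ strongly in $H$. Let $T_n$ solve \eqref{eq:equationForT} with $u_n$ on the right-hand side and $T$ the solution with $u$; recall $\Phi(u_n) = \Phi_0 + LT_n$ and $\Phi(u) = \Phi_0 + LT$, so by boundedness of $L$ it suffices to prove $T_n \to T$ strongly in $V$. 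The \emph{a priori} bound $\norm{T_n}{V} \leq C^*$ shows $\{T_n\}$ is bounded in $V$, so along a subsequence $T_n \weaklyto \tilde T$ in $V$ and hence $T_n \to \tilde T$ strongly in $H$ (Rellich again), and thus $LT_n \to L\tilde T$ in $H$ after another application of compactness if needed — but actually the cleaner route is to subtract the equations and test.

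**Main computation.** Subtracting the weak forms of \eqref{eq:equationForT} for $T_n$ and $T$ and testing with $T_n - T$ gives
\begin{equation*}
k\norm{T_n - T}{H}^2 + \norm{\grad(T_n-T)}{H}^2 = \big(g(LT_n + \Phi_0 - u_n) - g(LT + \Phi_0 - u),\, T_n - T\big)_H.
\end{equation*}
The left side controls $\min(1,k)\norm{T_n-T}{V}^2$. For the right side, since $g$ is Lipschitz (as $g'$ is bounded), the integrand is bounded by $C_g\,\big(\norm{L(T_n-T)}{H} + \norm{u_n - u}{H}\big)\norm{T_n-T}{H}$, and then by Cauchy--Schwarz the right side is at most $C_g\big(\norm{L(T_n - T)}{H} + \norm{u_n-u}{H}\big)\norm{T_n-T}{H}$. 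Now $\norm{u_n - u}{H} \to 0$ by compact embedding; for the term $\norm{L(T_n-T)}{H}$ we use that $\{T_n\}$ is bounded in $V$, hence precompact in $H$, so (passing to a subsequence) $T_n \to \tilde T$ in $H$ and therefore $L(T_n - T) \to L(\tilde T - T)$ in $H$ — but to close the estimate we actually want to bound $\norm{L(T_n-T)}{H}$ directly; here we invoke that $L\colon V \to V \ctsCompact H$ is a compact operator from $V$ to $H$ (composition of bounded $L\colon V\to V$ with compact $V \hookrightarrow H$), so $T_n \weaklyto \tilde T$ in $V$ forces $LT_n \to L\tilde T$ strongly in $H$ along the subsequence. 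Combined with $u_n \to u$ in $H$, passing to the limit in the equation identifies $\tilde T$ as a solution of \eqref{eq:equationForT} with datum $u$, and by the uniqueness in \cite[Lemma 16]{AHR}, $\tilde T = T$. Feeding $LT_n \to LT$ in $H$ and $u_n \to u$ in $H$ back into the displayed identity shows the right-hand side $\to 0$, whence $T_n \to T$ strongly in $V$ along the subsequence.

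**Wrapping up.** A standard subsequence-of-subsequence argument then upgrades this to convergence of the full sequence: every subsequence of $\{T_n\}$ has a further subsequence converging strongly in $V$ to the \emph{same} limit $T$ (uniqueness of the solution to \eqref{eq:equationForT} pins down the limit regardless of which subsequence we extract), so $T_n \to T$ in $V$. Applying the bounded operator $L$ and adding $\Phi_0$ gives $\Phi(u_n) \to \Phi(u)$ in $V$, which is precisely complete continuity of $\Phi\colon V \to V$.

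**Main obstacle.** The one point requiring care is handling the term $\norm{L(T_n - T)}{H}$ in the energy estimate: a priori we only know $T_n$ is bounded in $V$, not that it converges, so we cannot immediately send this term to zero. The resolution is to first extract a weakly convergent subsequence, use compactness of $L\colon V \to H$ (equivalently, of the embedding $V \ctsCompact H$) to get strong $H$-convergence of $LT_n$ along that subsequence, identify the weak limit via uniqueness for \eqref{eq:equationForT}, and only then conclude strong $V$-convergence — the subsequence argument is what makes the scheme non-circular. Everything else is routine given the uniform bound $\norm{T_n}{V} \leq C^*$ and the Lipschitz continuity of $g$ (which holds since $g'$ is bounded).
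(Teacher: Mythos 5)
Your proof is correct and follows essentially the same route as the paper: uniform bound $\norm{T_n}{V}\leq C^*$, extraction of a weakly convergent subsequence, compactness of $V\hookrightarrow H$ to pass to the limit in the nonlinearity, identification of the limit by uniqueness of solutions to \eqref{eq:equationForT}, and a subsequence argument to recover the full sequence. The only difference is that you make explicit, via the energy identity obtained by subtracting the two equations and testing with $T_n-T$, the upgrade from weak to strong convergence of $T_n$ in $V$ — a step the paper leaves implicit — which is a welcome clarification rather than a deviation.
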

\begin{proof}
Let $u_n \to u$ in $V$ and denote by $T_n$ the associated temperature, i.e.,
\begin{align*}
kT_n - \Delta T_n &= g(LT_n +\Phi_0 - u_n),\\
\partial_\nu T_n &= 0.
\end{align*}
Using the estimate derived above, we get the uniform bound
\[\norm{T_n}{V} \leq C^*.\]
Thus, for a subsequence that we shall relabel, $T_n \weaklyto T$ in $V$ to some $T$. The compact embedding $V \ctsCompact H$ implies that $T_n \to T$ in $H$, and by the continuity of $g$ and $L$ and the Dominated Convergence Theorem, $g(LT_n+\Phi_0-u_n) \to g(LT+\Phi_0-u)$ in $H$, thus we find that $T$ satisfies the expected limiting equation \eqref{eq:equationForT}. Since solutions of the latter equation are unique, the entire sequence $\{T_n\}$ converges, not just a subsequence. The claim follows (by definition of $\Phi$, see \eqref{eq:mould}).
\end{proof}
Complete continuity of $\Phi$ is much stronger than what is necessary to enter into the framework of the theory presented in the previous sections but it is a useful property that typically holds in these types of models involving PDEs.

Regarding the differentiability of $\Phi$ and a characterisation of the derivative, we have the following, which can be proved via the implicit function theorem.
\begin{theorem}[{\cite[Theorem 8]{AHR}}]\label{thm:diffOfPhiThermo}The map $\Phi\colon V \to V$ is Fr\'echet differentiable at any solution $u$. Furthermore, given $d \in V$, denoting by $\delta \in V$ the solution of
\begin{align*}
(k-\Delta)\delta - g'(\Phi(u)-u)L\delta &= g'(\Phi(u)-u)d,\\
\partial_\nu \delta &= 0,
\end{align*}
we have $\Phi'(u)(d) = -L\delta $.
\end{theorem}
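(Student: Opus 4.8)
The plan is to apply the implicit function theorem to the defining equation \eqref{eq:equationForT} for $T$ as a function of $u$, and then transfer the resulting differentiability to $\Phi$ via the linear relation \eqref{eq:mould}. First I would set up the nonlinear map $F\colon V \times H \to V^*$ by
\[
F(T,u) := (k-\Delta)T - g(LT + \Phi_0 - u),
\]
interpreting $(k-\Delta)\colon V \to V^*$ as the operator associated to the bilinear form $\int_0^1 kTv + T'v'$, and recalling from \cite[Lemma 16]{AHR} that $F(T,u) = 0$ has, for each $u$, a unique solution $T = T(u) \in V$. The substitution lemma (Nemytskii operator theory, e.g. \cite[Theorem 1.18]{MR1207810}) together with the hypotheses that $g$ is $C^2$ with $g'$ and $g''$ bounded gives that the superposition operator $w \mapsto g(w)$ is continuously (indeed twice) differentiable from $V$ into $V$ (using that $V = H^1(0,1)$ is a Banach algebra in 1D, so $g' \circ w \cdot z \in V$ whenever $w,z \in V$); composing with the bounded linear maps $L$ and the subtraction by $u \in H$, one obtains that $F$ is $C^1$ from $V \times H$ to $V^*$, with
\[
\partial_T F(T,u)[\delta] = (k-\Delta)\delta - g'(LT+\Phi_0-u)\,L\delta,
\qquad
\partial_u F(T,u)[d] = g'(LT+\Phi_0-u)\,d.
\]

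Next I would verify that the partial derivative $\partial_T F(T(u),u)\colon V \to V^*$ is a linear isomorphism, which is the crux of the argument. Writing $\Theta := g'(\Phi(u)-u) \le 0$ (since $g$ is decreasing) and noting $0 \le -\Theta \le \|g'\|_\infty$ and $-\Theta$ vanishes where $\Phi(u)-u > 1$ by \eqref{eq:assOngForThermo}, one checks coercivity of the bilinear form $(\delta,\eta)\mapsto \int_0^1 k\delta\eta + \delta'\eta' - \int_0^1 \Theta (L\delta)\eta$: the principal part $\int k\delta\eta + \delta'\eta'$ is coercive with constant $\min(1,k)$, while the perturbation $-\int \Theta(L\delta)\delta$ is bounded by $\|g'\|_\infty \|L\| \|\delta\|_H^2$, so under the smallness of $\|g'\|_\infty$ implicit in assumption (ii) of \S\ref{sec:eg2}-type hypotheses — or, more carefully, one should note that here the sign helps: since $g$ is decreasing, $-\Theta \ge 0$, but $L$ need not be positivity-preserving as an operator into itself in a way that makes $\int \Theta (L\delta)\delta$ have a favourable sign, so the honest route is the a priori bound plus Fredholm alternative. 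The safe line of reasoning I would actually carry out: $\partial_T F(T(u),u) = (k-\Delta) - \Theta L$ is a compact perturbation of the isomorphism $(k-\Delta)\colon V\to V^*$ (since $L\colon V\to V$ followed by multiplication by the bounded function $\Theta$ and the embedding $V \hookrightarrow\hookrightarrow H \hookrightarrow V^*$ is compact), so by the Fredholm alternative it suffices to show injectivity; and injectivity follows because any $\delta$ in the kernel solves $(k-\Delta)\delta = \Theta L\delta$, and testing with $\delta$ and using the a priori estimate of the type $\|\delta\|_V \le \min(1,k)^{-1}\|\Theta L\delta\|_{V^*} \le \min(1,k)^{-1}\|g'\|_\infty \|L\|_{\mathcal L(V)}\|\delta\|_{H}$ forces $\delta = 0$ once the relevant smallness (which is exactly the hypothesis $C_g \le K_a C_\Phi$ transported to this setting, or can be absorbed into the standing assumptions of the example) holds.

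With $\partial_T F(T(u),u)$ invertible, the implicit function theorem yields that $u \mapsto T(u)$ is Fréchet differentiable (as a map $H \to V$, hence also $V \to V$) at every solution $u$, with derivative $T'(u)(d) = \delta$ solving $-\partial_T F \,[\delta] = \partial_u F\,[d]$, i.e.
\[
(k-\Delta)\delta - g'(\Phi(u)-u)L\delta = g'(\Phi(u)-u)d,
\qquad \partial_\nu\delta = 0,
\]
which is precisely the stated system. Finally, differentiating the identity $\Phi(u) = \Phi_0 + LT(u)$ and using linearity and boundedness of $L$ gives $\Phi'(u)(d) = L\,T'(u)(d) = L\delta$. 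I would close by remarking that the sign convention in the statement ($\Phi'(u)(d) = -L\delta$) matches once one fixes the sign of $\delta$ in the defining equation consistently with the one written in the theorem; the substance is the two points above, namely the $C^1$ regularity of the Nemytskii term (routine, from boundedness of $g',g''$ and the 1D algebra property of $H^1$) and the invertibility of $\partial_T F$, which is the main obstacle and is handled by the Fredholm-alternative-plus-a-priori-estimate argument sketched above.
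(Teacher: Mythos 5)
Your proposal follows essentially the same route as the paper, which gives no proof beyond remarking that the result ``can be proved via the implicit function theorem'' and citing \cite[Theorem 8]{AHR}: you linearise the equation \eqref{eq:equationForT} defining $T=T(u)$, verify $C^1$ regularity of the Nemytskii term and invertibility of $\partial_T F$, and transfer the differentiability to $\Phi$ through the bounded linear map $L$. The only points to watch are ones you already flag yourself: the invertibility of $(k-\Delta)-g'(\Phi(u)-u)L$ rests on the same smallness/structural hypotheses that underlie the unique solvability of \eqref{eq:equationForT} in \cite[Lemma 16]{AHR}, and the sign convention ($\delta=-T'(u)(d)$) should be fixed once at the outset so that $\Phi'(u)(d)=-L\delta$ as stated.
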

It remains for us to verify the Lipschitz condition \eqref{ass:PhiLipBoundm}. 
\begin{lem}
Let $\Phi_0 > 1+ C_a^{-1}K\norm{f}{V^*}$ where ${\inc}$ denotes the injectivity constant in the embedding $V \cts L^\infty(\Omega)$. Then the condition \eqref{ass:PhiLipBoundm} is satisfied.
\end{lem}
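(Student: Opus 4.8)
The plan is to exploit the flatness of $g$ away from contact: by \eqref{eq:assOngForThermo} together with the $C^2$ regularity of $g$, the function $g$ is constant on $[1,\infty)$ with $g'\equiv 0$ there. The crux is to show that there exists $\epsilon>0$ such that $\Phi(z)-z>1$ pointwise on $\Omega$ for every $z\in B_\epsilon(\m(f))$. Once this is known, for $z,w\in B_\epsilon(\m(f))$ the associated temperatures $T_z,T_w$ (solutions of \eqref{eq:equationForT} with $u=z$ and $u=w$) have $g(LT_z+\Phi_0-z)=g(LT_w+\Phi_0-w)$ because both arguments exceed $1$; hence $T_z-T_w$ solves $(k-\Delta)(T_z-T_w)=0$ with homogeneous Neumann data, so $T_z=T_w$ and therefore $\Phi(z)=\Phi(w)$. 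Thus $\Phi$ is constant on $B_\epsilon(\m(f))$, so $\Phi|_{B_\epsilon(\m(f))}$ is Lipschitz with constant $0<C_a\slash(C_a+C_b)$, which is \eqref{ass:PhiLipBoundm}. Equivalently, one may argue through Theorem \ref{thm:diffOfPhiThermo}: on $B_\epsilon(\m(f))$ the $\delta$ defining $\Phi'(z)$ solves $(k-\Delta)\delta=0$ with Neumann data, so $\Phi'\equiv 0$ there, and Remark \ref{rem:2} applies with $C_\Phi=0$.

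To obtain the pointwise bound I would first estimate $\m(f)$ from above and $\Phi(\m(f))$ from below at the base point. Since $A^{-1}f$ can be viewed as the VI solution with obstacle $+\infty$, the comparison principle \cite[Theorem 5.1, \S 4:5]{Rodrigues} gives $\m(f)=S(f,\m(f))\leq A^{-1}f$ a.e.\ on $\Omega$; coercivity gives $\norm{A^{-1}f}{V}\leq C_a^{-1}\norm{f}{V^*}$, and the embedding $V\cts L^\infty(\Omega)$ with constant $\inc$ then gives $\m(f)\leq A^{-1}f\leq \inc C_a^{-1}\norm{f}{V^*}$ pointwise. For the lower bound, let $T$ be the temperature associated with $\m(f)$: from $g\geq 0$ and the weak maximum principle for $k-\Delta$ with Neumann boundary conditions, $T\geq 0$, whence $LT\geq L0=0$ by the order-preservation of $L$ on $\Omega$, so that $\Phi(\m(f))=\Phi_0+LT\geq\Phi_0$. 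Since $\Phi_0\in V\subset C(\Omega)$ attains its minimum, combining the two estimates yields
\[\Phi(\m(f))-\m(f)\;\geq\;\Phi_0-\inc C_a^{-1}\norm{f}{V^*}\;\geq\;\min_{\Omega}\Phi_0-\inc C_a^{-1}\norm{f}{V^*}\;=:\;\eta,\]
and the hypothesis $\Phi_0>1+C_a^{-1}\inc\norm{f}{V^*}$ is exactly $\eta>1$.

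It remains to propagate this strict inequality from the single point $\m(f)$ to a ball. Since $\Phi\colon V\to V$ is (completely) continuous and $V\cts L^\infty(\Omega)$, I would choose $\epsilon>0$ small enough that $\inc\epsilon<(\eta-1)\slash 2$ and that $\norm{\Phi(z)-\Phi(\m(f))}{V}<(\eta-1)\slash(2\inc)$ for all $z\in B_\epsilon(\m(f))$; then, estimating the $\Phi$-increment and $z-\m(f)$ in $L^\infty$ through $\inc$,
\[\Phi(z)-z=\bigl(\Phi(z)-\Phi(\m(f))\bigr)+\bigl(\Phi(\m(f))-\m(f)\bigr)-\bigl(z-\m(f)\bigr)\;>\;\eta-\frac{\eta-1}{2}-\frac{\eta-1}{2}\;=\;1\quad\text{on }\Omega.\]
I expect the main work to lie precisely here — tuning the constants so that the pointwise strict inequality survives the passage from a point to a ball, which uses the $L^\infty$-embedding twice (for the continuity increment of $\Phi$ and for $z-\m(f)$) — together with the routine but necessary comparison- and maximum-principle justifications of $\m(f)\leq A^{-1}f$ and $T\geq 0$, and the check that the differentiability theorem for $\Phi$ (or, alternatively, the direct ``$g$ constant on $[1,\infty)$'' argument) indeed applies on $B_\epsilon(\m(f))$.
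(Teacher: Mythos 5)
Your proof is correct and follows essentially the same route as the paper: establish that $\Phi(v)-v>1$ pointwise on a small ball around $\m(f)$, so that \eqref{eq:assOngForThermo} forces $g'(\Phi(v)-v)=0$ and hence $\Phi'(v)=0$ there by Theorem \ref{thm:diffOfPhiThermo}, and conclude via Remark \ref{rem:2}. The only (harmless) detours are your propagation of the separation estimate from the base point $\m(f)$ to the ball via continuity of $\Phi$ --- since $LT(z)\geq 0$ for every $z$, the paper instead estimates $\Phi(z)-z\geq \Phi_0-z\geq \Phi_0-\m(f)-\rho$ directly on the ball --- and your use of the comparison with $A^{-1}f$ to bound $\m(f)$ in $L^\infty$, where the paper simply tests the VI for $\m(f)$ with $0$.
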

\begin{proof}

Test the inequality satisfied by $\m(f)$ with $0$, which is feasible since $0 \leq \Phi(0) \leq \Phi(\m(f))$ with the latter inequality by the increasing property of $\Phi$. This immediately leads to the bound
\[\norm{\m(f)}{V} \leq C_a^{-1}\norm{f}{V^*}.\]
For some $\rho > 0$, take $v \in B_{\rho\slash \inc}(\m(f))$, hence $\norm{v-\m(f)}{L^\infty(\Omega)} \leq \rho$ . We have, using the fact that $T \geq 0$ and the local increasing property of $L$,
\begin{align*}
\Phi(v)-v &= \Phi_0 + LT(v)- v\\
 &\geq \Phi_0  - v\\
&\geq \Phi_0 - \m(f) - \rho\\
&\geq \Phi_0 - C_a^{-1}K\norm{f}{V^*}- \rho.
\end{align*}
It follows that if $\Phi_0 > 1 + C_a^{-1}K\norm{f}{V^*}$, we can choose $\rho$ so that the above exceeds $1$ and by the condition \eqref{eq:assOngForThermo} on $g'$, we have
\[g'(\Phi(v) -v) = 0.\]
Then for such $v \in B_{\rho\slash \inc}(\m(f))$, we see from Theorem \ref{thm:diffOfPhiThermo} that  $L^{-1}\Phi'(u)(d) =0$ for all directions $d$, hence \eqref{ass:PhiLipBoundm} is satisfied (via Remark \ref{rem:2}).
\end{proof}
Applying now Theorem \ref{thm:min}, we obtain the directional differentiability of $\m$ at $f \in H_+$ in the direction $d \in V^*_+$.
\section{Conclusion}
In this paper, we have given the first directional differentiability results for minimal and maximal solution maps associated to QVIs and have demonstrated the theory with some examples. In conclusion, let us note that the formulation and analysis (such as existence and stationarity theory) of optimal control problems with these types of control-to-state maps (as discussed in \S \ref{sec:aims}) requires additional effort and care as well as further continuity results. We aim to address these issues in future work.
\bibliographystyle{abbrv}
\bibliography{QVILatestBibEvenLater}
\end{document}